\newtheorem{remark}{Remark}
\newtheorem{definition}{Definition}
\newtheorem{theorem}{Theorem}
\newtheorem{proposition}{Proposition}
\newtheorem{lemma}{Lemma}
\newtheorem{example}{Example}
\newtheorem{corollary}{Corollary}
\def\BibTeX{{\rm B\kern-.05em{\sc i\kern-.025em b}\kern-.08em
    T\kern-.1667em\lower.7ex\hbox{E}\kern-.125emX}}
\newcommand{\oomit}[1]{}
\begin{document}

\title{Reach-Avoid Analysis for Polynomial Stochastic Differential Equations}
\author{Bai Xue$^{1}$ and Naijun Zhan$^{1,2}$ and Martin Fr\"anzle$^{3}$
\IEEEauthorblockA{ \\ \textit{1. State Key Lab. of Computer Science, Institute of Software, CAS, Beijing, China}\\
\{xuebai,znj\}@ios.ac.cn}\\
\textit{2, University of Chinese Academy of Sciences,  CAS, Beijing, China}\\
\textit{3. Carl von Ossietzky Universit\"at, Oldenburg, Germany\\
        {martin.fraenzle@uol.de}}
}

\IEEEtitleabstractindextext{%
\begin{abstract}
In this paper we propose a novel semi-definite programming approach that solves reach-avoid problems over open (i.e., not bounded a priori) time horizons for dynamical systems modeled by polynomial stochastic differential equations. The reach-avoid problem in this paper is a probabilistic guarantee: we approximate from the inner a $p$-reach-avoid set, i.e., the set of initial states guaranteeing with probability larger than $p$ that the system eventually enters a given target set while remaining inside a specified safe set till the target hit. Our approach begins with the construction of a bounded value function, whose strict $p$ super-level set is equal to the $p$-reach-avoid set. This value function is then reduced to a twice continuously differentiable solution to a system of equations. The system of  equations  facilitates the construction of a semi-definite program using sum-of-squares decomposition for multivariate polynomials and thus the transformation of nonconvex reach-avoid problems into a convex optimization problem. The semi-definite program can be solved efficiently in polynomial time 
with many existing powerful algorithms such as interior point methods and off-the-shelf software packages. We would like to point out that our approach can straightforwardly be specialized to 
address classical safety verification by, a.o., stochastic barrier certificate methods and reach-avoid analysis for ordinary differential equations. In addition, several examples are provided to  demonstrate theoretical and algorithmic developments of the proposed method.
\end{abstract}

\begin{IEEEkeywords}
Stochastic Differential Equations, Reach-avoid Analysis, Inner Approximation, Semi-definite Programming.
\end{IEEEkeywords}}

\maketitle

\IEEEdisplaynontitleabstractindextext

\IEEEpeerreviewmaketitle

\section{Introduction}
\label{sec:intro}
Reach-avoid analysis combines the construction of safety and specific progress guarantees for dynamical systems, as it addresses guarantees for both the eventual reach of desirable states and avoidance of unsafe states. It is employed in diverse engineering applications including collision avoidance \cite{malone2014stochastic} and motion planning \cite{lesser2013stochastic}. Algorithmic methods for computing reach-avoid guarantees have consequently been widely studied, e.g.\ in \cite{margellos2011,korda2014controller,xue2016reach}. In the qualitative setting, reach-avoid analysis generally attempts to determine a set of initial states driving the system to a desirable target set with certainty over either finite time horizons (i.e., given a fixed upper bound on the first hitting time) or open time horizons (i.e., unknown upper bound on the first hitting time) while reliably avoiding a set of unsafe states before hitting the target set.

In existing literature, difference equations (DEs) and ordinary differential equations (ODEs) are frequently used to model deterministic systems, and a large body of work has been pursued addressing reach-avoid analysis for such systems, like \cite{margellos2011,fisac2015reach,han2018semidefinite,xue2019,xue2020}. But DEs or ODEs constitute confined models of real-world systems, as stochastic processes are central to many phenomena in physics, engineering, biology and other disciplines \cite{bartlett1978,pola2003stochastic,franzle2011measurability}. When considering stochastic systems, i.e., dynamical systems involving stochastic processes, solving the reach-avoid problem qualitatively in a non-stochastic manner usually gives pessimistic answers, since in general resultant bounds on the values of stochastic inputs will be overly conservative. It indeed is  natural to formulate and solve probabilistic variants of reach-avoid problems. The notion of $p$-reach-avoid reachability used herein reflects this probabilistic perspective. It requires identifying the set of initial states that guarantee with probability being larger than $p$ that the dynamical system reaches a given target set. It was studied in various engineering applications such as a spacecraft rendezvous and docking problem \cite{lesser2013stochastic} and a Zermelo navigation problem \cite{esfahani2016}.
In the literature, %discrete-time 
Markov chains, Markov decision processes and stochastic differential equations (SDEs) are among the most commonly 
used models for  stochastic processes. The $p$-reach avoid problem of 
the first two   over both finite time horizons and open time horizons has 
been studied in, e.g., \cite{abate2008,gleason2017,vinod2021stochastic,xue2021reach}. The quest for generalizations to continuous-time dynamical system models, especially SDEs, remains largely unanswered.

In this paper we therefore investigate the $p$-reach-avoid problem for systems modeled by polynomial SDEs and the focus is on the computation of inner-approximations of the exact $p$-reach-avoid set over open time horizons. The $p$-reach-avoid set is the set of initial states guaranteeing with probability being larger than $p$ an eventual hit of a desirable target set while staying inside a designated safe set prior to hitting the target. 
The inner-approximation problem is reduced to a semi-definite programming problem in our approach. 
The construction of the semi-definite program originates from a value function whose strict $p$ super-level set equals the $p$-reach-avoid set. The particular value function is defined based on an appropriately stopped variant of the dynamical process under investigation, and is shown to be the unique twice continuously differentiable solution to an effectively constructed system of equations. Based on the obtained system of equations, we further construct a system of inequalities and encode them into semi-definite constraints using the sum of-squares decomposition for multivariate polynomials. This system of constraints finally 
results in a semi-definite program whose solution under-approximates the exact $p$-reach-avoid set. The performance of the proposed approach is illustrated by several examples. 

The main contributions of this work are summarized below.
\begin{enumerate}
    \item An innovative system of equations is proposed for characterizing the exact $p$-reach-avoid set over open time horizons for systems modelled by SDEs. The system of equations plays a fundamental role in our methodology, since it explains the origins of the constructed convex program for inner-approximating the $p$-reach-avoid set. Besides, the proposed system of equations can also be used to construct a set of constraints for addressing the classical safety verification problem of SDEs that can be solved with stochastic barrier certificate methods in \cite{PrajnaJP07}. However, these two methods are disparate, as commented in Remark \ref{UN}. In this article, we  will not benchmark the performance of our method on the classical safety verification problem, since the focus of this work is on inner-approximating \emph{reach-avoid} sets. However, we theoretically compare the set of constraints constructed by our method and the one in \cite{PrajnaJP07}. It concludes that  the set of constraints constructed by our method is more expressive than the one in \cite{PrajnaJP07}.
    \item A novel convex programming based approach is proposed for inner-approximating the $p$-reach-avoid set over open time horizons, which solves the complicated non-convex reachability problem arising in dynamical systems and control theory by solving a single semi-definite program. The semi-definite program is relatively simple and can be efficiently solved by many existing  powerful algorithms and off-the-shelf software packages.
    \item  Our semi-definite program is implemented based on the sum-of-squares module of YALMIP \cite{lofberg2004} and the semi-definite programming solver Mosek \cite{mosek2015mosek}. Several examples are presented to demonstrate theoretical and algorithmic developments of our method.
\end{enumerate}

\subsection*{\textbf{Related Work}}
\label{rw}
In the discrete-time setting, the reach-avoid problem has been widely studied for Markov decision processes, e.g., \cite{abate2007,abate2008,summers2010,vinod2021stochastic}. The corresponding reach-avoid problem is  normally  reformulated as a dynamic programming problem  by introducing indicator functions for the sets of target and unsafe states. A straightforward way to numerically approximate the value function of the reach-avoid dynamic program is by constructing a grid state space \cite{abate2007}. Under suitable assumptions on the Markov decision process limiting local variation of its kernel, gridding approaches can provide rigorous performance guarantees on the resulting approximate solution, yet they suffer from the ``curse of diemensionality'' of an exponential complexity   in the dimension of the state space. In order to alleviate the complexity of  gridding-based approximations, a semi-definite programming based method was proposed in \cite{drzajic2017,xue2021reach}.

In the continuous-time setting, reach-avoid analysis for systems modeled by SDEs has attracted increasing attention since the first study of reach-avoid verification for SDEs over open time horizons in~\cite{PrajnaJP07}. In \cite{PrajnaJP07} a typical supermartingale was employed as a stochastic barrier certificate followed by
computational conditions derived from Doob's martingale
inequality~\cite{karatzas2014brownian}. It provides \textit{an upper bound on the probability} of reaching a set of unsafe states for a stochastic system starting from a set of legal initial state. Recently, via removing the requirement of reaching target sets, the stochastic barrier certificate-based method was extended to cater for bounding the probability of leaving a finite region of state space over a given finite time by leveraging a relaxed formulation termed $c$-martingale for locally stable systems in \cite{steinhardt2012finite}, and further extended to the problem of controller synthesis for ensuring that unsafe probability is below a threshold in \cite{santoyo2021barrier}. The differences between the present work and \cite{PrajnaJP07} are twofold. One is that the set of permissible initial states is synthesized from the system dynamics and the desired reach-avoid property rather than analysing a given state set as in \cite{PrajnaJP07}. The other one is that \textit{a lower bound on the probability} of reaching a specified, considered to be desirable, set of target states is computed in the present work. The method in the present work can nevertheless straightforwardly be modified to also cover extended computation of an upper bound on the probability as in \cite{PrajnaJP07}, as pointed out in Remark \ref{UN}.

Another known method for studying the reach-avoid problem is the Hamilton-Jacobi reachability one. Hamilton-Jacobi reachability method addresses reach-avoid problems by exploiting the link to optimal control through viscosity solutions of Hamilton-Jacobi type equations. It extends the use of Hamilton-Jacobi equations, which are widely used in optimal control theory, to perform reachability analysis over both finite time horizons \cite{esfahani2016} and open time horizons \cite{Koutsoukos2008}. However, grid-based numeric approaches, e.g., the finite difference method in~\cite{Koutsoukos2008} and the level set method in~\cite{MT05}, are traditionally used to solve these equations, rendering the Hamilton-Jacobi reachability method computationally infeasible for even moderate sized systems. Furthermore, such methods cannot guarantee that the computed result is an outer- or inner-approximation of the reach-avoid set. In \cite{bujorianu2007new} the reach-avoid problem was reduced to a problem of solving semi-continuous solutions to some variational inequalities. The lack of continuous solutions is an obstacle to actually solving these inequalities solving. In contrast, we propose an innovative system of partial differential equations, which is different from existing Hamilton-Jacobi equations in literature, to characterize reach-avoid sets for SDEs. The proposed equations facilitate the construction of semi-definite programs, which can be efficiently solved by interior-point methods in polynomial time, for computing guaranteed inner-approximations of reach-avoid sets.

Recently, a moment-based method, which is also a convex programming based method, was proposed for studying reach-avoid problems over finite time horizons for SDEs in \cite{sloth2015safety}, and a semi-definite programming method derived from Feynman-Kac formula was proposed for analysing avoid problems (without the requirement of reaching target sets) over finite time horizons in~\cite{liureachability} that is algebraically over- and under-approximating the staying probability in a given safety area. Different from the above two methods, %the one in the present work 
our method in this paper addresses the reach-avoid problem over open time horizons rather than finite time horizons.

The structure of this paper is as follows: Section \ref{sec:preliminaries} introduces stochastic systems and reach-avoid problems of interest. After detailing the derivation of the system of equations for characterizing the $p$-reach-avoid set in Subsection \ref{equations}, we introduce our semi-definite programming method for inner-approximating the $p$-reach-avoid set in Subsection \ref{SDPI}. In Section \ref{sec:exam} we demonstrate the performance of our approach on several examples and finally provide conclusions as well as future work in Section \ref{sec:conclusion}.
\section{Preliminaries}
\label{sec:preliminaries}
We start our exposition by formally presenting polynomial SDEs and $p$-reach-avoid sets of interest. Beforehand we introduce basic notions used throughout this paper: $\mathbb{R}_{\geq 0}$ stands for the set of nonnegative reals and $\mathbb{R}$ for the set of real numbers. For a set $\Delta$, $\Delta^c$, $\overline{\Delta}$ and $\partial \Delta$ denote the complement, the closure and the boundary of the set $\Delta$, respectively. $\bigwedge$ and $\bigvee$ denote the logical operation of conjunction and disjunction, respectively. $\mathbb{R}[\cdot]$ denotes the ring of polynomials in variables given by the argument. Vectors are denoted by boldface letters. $\sum[\bm{x}]$ is used to represent the set of sum-of-squares polynomials over variables $\bm{x}$, i.e.,
\[\sum[\bm{x}]=\{p\in \mathbb{R}[\bm{x}]\mid p=\sum_{i=1}^{k'} q_i^2,q_i\in \mathbb{R}[\bm{x}],i=1,\ldots,k'\}.\]

Let $(\Omega,\mathcal{F},\mathbb{P})$ be a probability space \cite{oksendal2003}, where $\Omega$ is the sample space, $\mathcal{F}\subseteq 2^{\Omega}$ is a $\sigma$-algebra on $\Omega$, and $\mathbb{P}: \mathcal{F}\rightarrow [0,1]$ is a probability measure on the measurable space $(\Omega,\mathcal{F})$. A random variable $\bm{X}$ defined on the probability space $(\Omega,\mathcal{F},\mathbb{P})$ is an $\mathcal{F}-$measurable function $\bm{X}: \Omega\rightarrow \mathbb{R}^n$; its expectation (w.r.t. $\mathbb{P}$) is denoted by $E[\bm{X}]$. Every random variable $\bm{X}$ induces a probability measure $\mu_{\bm{X}}: \mathcal{B}\rightarrow [0,1]$ on $\mathbb{R}^n$, defined as $\mu_{\bm{X}}(B)=\mathbb{P}(\bm{X}^{-1}(B))$ for a Borel set $B$ in the Borel $\sigma-$algebra $\mathcal{B}$ on $\Omega$. $\mu_{\bm{X}}$ is called the distribution of $\bm{X}$, and its support set is $\mathtt{supp}{\mu_{\bm{X}}}=\overline{\{B\in \mathcal{B}\mid \mu_{\bm{X}}(B)>0\}}$. The support set for a real-valued function $f(\cdot): \Delta \rightarrow \mathbb{R}$ is the closure of the subset of $\Delta$, where $f$ is non-zero, i.e.,  $\mathtt{supp}{f}=\overline{\{\bm{x}\in \Delta \mid f(\bm{x})\neq 0}\}$. 
%\textcolor{red}{ZNJ: I'm not sure such definition is correct or not.}
A continuous-time stochastic process is a parameterized collection of random variables $\{\bm{X}(t,\bm{w}),t\in T\}$ where the parameter space $T$ can be either the halfline $\mathbb{R}_{\geq 0}$ or an interval $[a,b]$. Note that for each $t\in T$ fixed we have a random variable $\bm{X}(t,\cdot): \Omega\rightarrow \mathbb{R}^n$. On the other hand, fixing $\bm{w}\in \Omega$ we can consider the function $\bm{X}(\cdot,\bm{w}): T\rightarrow \mathbb{R}^n$, which is called a path of the stochastic process. A collection $\{\mathcal{F}_t\mid t\geq 0\}$ of $\sigma-$algebra of sets in  $\mathcal{F}$ is a filtration if $\mathcal{F}_t\subseteq \mathcal{F}_{t+s}$ for $t,s\in \mathbb{R}_{\geq 0}$ (Intuitively, $\mathcal{F}_t$ carries the information known to an observer at time $t$.). A random variable $\tau:\Omega\rightarrow \mathbb{R}_{\geq 0}$ is called a stopping time w.r.t. some filtration $\{\mathcal{F}_t\mid t\geq 0\}$ of $\mathcal{F}$ if $\{\tau\leq t\}\in \mathcal{F}_t$ for all $t \geq 0$. Note that a constant time is always a stopping time. 

We consider stochastic systems modeled by time-homogeneous SDEs of the form
\begin{equation}
\label{sde}
d \bm{X}(t,\bm{w})=\bm{b}(\bm{X}(t,\bm{w}))dt +\bm{\sigma}(\bm{X}(t,\bm{w})) d \bm{W}(t,\bm{w}), t\geq 0,
\end{equation}
where $\bm{X}(\cdot,\cdot): T \times \Omega \rightarrow \mathbb{R}^n$  is an $n$-dimensional continuous-time stochastic process, $\bm{W}(\cdot,\cdot): T\times \Omega \rightarrow \mathbb{R}^m$ is an $m$-dimensional Wiener process (standard Brownian motion), the mapping $\bm{b}(\cdot): \mathbb{R}^n \rightarrow \mathbb{R}^n$ is a vector-valued polynomial (i.e., each of its components is a polynomial), and $\bm{\sigma(\cdot)}: \mathbb{R}^n\rightarrow \mathbb{R}^{n\times m}$ is a matrix-valued polynomial, i.e., each of its components is a polynomial.

Since each component of both $\bm{b}(\bm{x})$ and $\bm{\sigma}(\bm{x})$ is polynomial over $\bm{x}$, satisfying locally Lipschitz conditions, then given an initial state $\bm{x}_0\in \mathbb{R}^n$, an SDE of the form \eqref{sde} has a unique (maximal local) strong solution over some time interval $[0,T^{\bm{x}_0}(\bm{w}))$ for $\bm{w}\in \Omega$ [Lemma 2.2, \cite{song2015noise}], where $T^{\bm{x}_0}(\bm{w})$ is a positive real value. We denote it as $\bm{X}^{\bm{x}_0}(\cdot,\bm{w}): [0,T^{\bm{x}_0}(\bm{w}))\times \Omega \rightarrow \mathbb{R}^n$, which satisfies the stochastic integral equation 
\begin{equation*}
\label{integral}
\begin{split}
\bm{X}^{\bm{x}_0}(t,\bm{w})=\bm{x}_0&+\int_{0}^t \bm{b}(\bm{X}^{\bm{x}_0}(s,\bm{w})) ds \\
&+\int_{0}^t \bm{\sigma}(\bm{X}^{\bm{x}_0}(s,\bm{w})) d \bm{W}(s,\bm{w})
\end{split}
\end{equation*}
for $t\in [0,T^{\bm{x}_0}(\bm{w}))$.

The infinitesimal generator underlying system \eqref{sde} is presented in Definition \ref{infi}.
\begin{definition}\cite{oksendal2003}
\label{infi}
Let $\bm{X}^{\bm{x}}(t,\bm{w})$ be a time-homogeneous It\^{o} diffusion given by {\rm SDE} \eqref{sde} with initial state $\bm{x} \in \mathbb{R}^n$. The infinitesimal generator $\mathcal{A}$ of $\bm{X}^{\bm{x}}(t,\bm{w})$ is defined by 
\begin{equation}
\begin{split}
&\mathcal{A}f(\bm{x})=\lim_{t\rightarrow 0}\frac{E[f(\bm{X}^{\bm{x}}(t,\bm{w}))]-f(\bm{x})}{t}\\
&=\sum_{i} b_i(\bm{x}) \frac{\partial f(\bm{x})}{\partial x_i}+\frac{1}{2}\sum_{i,j}(\bm{\sigma}\bm{\sigma}^{\top})_{ij}(\bm{x})\frac{\partial^2 f(\bm{x})}{\partial x_i \partial x_j}.
\end{split}
\end{equation}
for any $f\in \mathcal{C}^{2}(\mathbb{R}^n)$, where $\mathcal{C}^{2}(\mathbb{R}^n)$ denotes the set of twice continuously differentiable functions.
\end{definition}

As a stochastic generalization of the Newton-Leibniz axiom, Dynkin's formula gives the expected value of any suitably smooth function of an It\^o diffusion at a stopping time.
\begin{theorem}[Dynkin's formula, \cite{oksendal2003}]
\label{dyn_theom}
Let $\bm{X}^{\bm{x}}(t,\bm{w})$ be a time-homogeneous It\^o diffusion given by {\rm SDE} \eqref{sde} with the initial state $\bm{x}\in \mathbb{R}^n$. Suppose $\tau$ is a stopping time with $E[\tau]<\infty$, and $f\in \mathcal{C}^{2}(\mathbb{R}^n)$ with compact support. Then  
\begin{equation}
\label{dyn}
E[f(\bm{X}^{\bm{x}}(\tau,\bm{w}))]=f(\bm{x})+E[\int_{0}^{\tau} \mathcal{A}f(\bm{X}^{\bm{x}}(s,\bm{w}))ds].
\end{equation}
\end{theorem}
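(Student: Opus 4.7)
The plan is to derive Dynkin's formula from It\^o's formula by stopping at a bounded approximation of $\tau$ and then passing to the limit; this is the standard route in, e.g., \cite{oksendal2003}. First I would apply It\^o's formula to $f(\bm{X}^{\bm{x}}(t,\bm{w}))$, which yields the pathwise identity
\begin{equation*}
f(\bm{X}^{\bm{x}}(t,\bm{w}))=f(\bm{x})+\int_{0}^{t}\mathcal{A}f(\bm{X}^{\bm{x}}(s,\bm{w}))\,ds+\int_{0}^{t}\nabla f(\bm{X}^{\bm{x}}(s,\bm{w}))^{\top}\bm{\sigma}(\bm{X}^{\bm{x}}(s,\bm{w}))\,d\bm{W}(s,\bm{w}).
\end{equation*}
Here the drift-plus-Hessian part collapses exactly to the generator $\mathcal{A}f$ as defined in Definition \ref{infi}, so the decomposition is clean.

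Next I would localize by replacing $t$ with the bounded stopping time $\tau\wedge n$ for $n\in\mathbb{N}$ and take expectations on both sides. Because $f\in\mathcal{C}^{2}(\mathbb{R}^{n})$ has compact support $K$, both $\nabla f$ and the entries of the Hessian of $f$ are bounded; together with the local Lipschitz (hence locally bounded) character of $\bm{b}$ and $\bm{\sigma}$ on $K$, this makes the integrand of the It\^o integral uniformly bounded on $\{\bm{X}^{\bm{x}}(s,\bm{w})\in K\}$ and zero otherwise. Consequently, the stochastic integral stopped at $\tau\wedge n$ is a genuine (square-integrable) martingale with zero mean, yielding
\begin{equation*}
E[f(\bm{X}^{\bm{x}}(\tau\wedge n,\bm{w}))]=f(\bm{x})+E\!\left[\int_{0}^{\tau\wedge n}\mathcal{A}f(\bm{X}^{\bm{x}}(s,\bm{w}))\,ds\right].
\end{equation*}

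Finally I would let $n\to\infty$. Path continuity of $\bm{X}^{\bm{x}}(\cdot,\bm{w})$ and continuity of $f$ give $f(\bm{X}^{\bm{x}}(\tau\wedge n,\bm{w}))\to f(\bm{X}^{\bm{x}}(\tau,\bm{w}))$ almost surely, and boundedness of $f$ (compact support) justifies bounded convergence for the left-hand side. For the right-hand side I would use that $\mathcal{A}f$ is globally bounded by some constant $M$ on $\mathbb{R}^{n}$, so the integrand is dominated by $M\cdot\mathbf{1}_{[0,\tau]}(s)$, which has finite expectation thanks to the hypothesis $E[\tau]<\infty$; dominated convergence then yields the claimed identity.

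The main obstacle, and the only nonroutine step, is controlling the It\^o integral and the time integral uniformly in $n$ so that the limit exchange is legitimate. Both controls hinge on the \emph{compact support} hypothesis on $f$ (which bounds $\nabla f$, $\mathcal{A}f$, and $f$ globally) together with $E[\tau]<\infty$; if either assumption were dropped one would need either a localizing sequence of stopping times pushing the diffusion out of growing balls, or an explicit moment bound on $\bm{X}^{\bm{x}}$, neither of which is needed here.
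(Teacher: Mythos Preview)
The paper does not actually prove this theorem; it is quoted verbatim from \cite{oksendal2003} and used without proof as a standard preliminary tool. Your argument is correct and is precisely the classical route taken in \cite{oksendal2003}: It\^o's formula, truncation by $\tau\wedge n$, vanishing of the martingale term due to the compact-support bound on $\nabla f^{\top}\bm{\sigma}$, and dominated convergence using the global bound on $\mathcal{A}f$ together with $E[\tau]<\infty$.
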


In Theorem \ref{dyn_theom}, if we consider a twice continuously differentiable function $f$ defined on a bounded set  $B\subseteq \mathbb{R}^n$, i.e., $f(\bm{x}) \in \mathcal{C}^2(B)$, $f$ can be any twice continuously differentiable function $f \in  \mathcal{C}^2(B)$ without the assumption of compact support. In this case, the support of $f$ is of course, compact, since the support of $f$ is always closed and bounded. 
 
 Now, we define the $p$-reach-avoid set, which is a set of initial states such that the stochastic system \eqref{sde} starting from it will touch a compact target set $\mathcal{T}$ in finite time while staying within a bounded and open safe set $\mathcal{X}$ preceding the target first hitting time with probability being larger than $p\in [0,1)$, where 
\begin{equation}
\begin{split}
&\mathcal{T}=\{\bm{x}\in \mathbb{R}^n\mid g(\bm{x})\leq 1\} \text{~and}\\
&\mathcal{X}=\{\bm{x}\in \mathbb{R}^n \mid h_0(\bm{x})< 0 \}
\end{split}
\end{equation}
with $g(\bm{x}), h_0(\bm{x})\in \mathbb{R}[\bm{x}]$ and $\mathcal{T}\subseteq \mathcal{X}$.
\begin{definition}[$p$-Reach-Avoid Set]\label{RNS}
The $p$-reach-avoid set ${\rm RA}_p$ is the set of initial states such that every trajectory of the stochastic system \eqref{sde} originating in it will enter the target set $\mathcal{T}$ at some time $t\in \mathbb{R}_{\geq 0}$ while staying inside the safe set $\mathcal{X}$ over the time horizon $[0,t]$ with probability being larger than $p\in [0,1)$, i.e.,
\begin{equation*}
\label{ras}
{\rm RA}_p=\left\{\bm{x} \in \mathcal{X}\middle|\;
\begin{aligned}
&\mathbb{P}\Big(\exists t \in  \mathbb{R}_{\geq 0}.\big[ \bm{X}^{\bm{x}}(t,\bm{w})\in \mathcal{T} \bigwedge \\
& \forall \tau\in [0,t]. \bm{X}^{\bm{x}}(\tau,\bm{w})\in \mathcal{X}\big]\Big)>p
\end{aligned}
\right\}.
\end{equation*}
An inner-approximation is a subset of the set ${\rm RA}_p$.
\end{definition}

\section{Inner-approximating $p$-Reach-avoid Sets}
\label{sec:inner}
In this section we present our semi-definite programming based approach for inner-approximating the $p$-reach-avoid set ${\rm RA}_p$. The semi-definite program is constructed via relaxing a system of equations, whose twice continuously differentiable solution is equal to a bounded value function with its strict $p$ super-level set being equal to the $p$-reach-avoid set ${\rm RA}_p$. 

\subsection{Characterization of $p$-Reach-Avoid Sets}
\label{equations}
In this subsection we introduce a system of elliptic partial differential equations for characterizing the $p$-reach-avoid set.

The derivation of such equations begins with a value function, which is defined by a new stochastic process $\{\widehat{\bm{X}}^{\bm{x}_0}(t,\bm{w}), t\in \mathbb{R}_{\geq 0}\}$ for $\bm{x}_0\in \overline{\mathcal{X}}$, which is a stopped process corresponding to $\{\bm{X}^{\bm{x}_0}(t,\bm{w}), t\in [0,T^{\bm{x}_0}(\bm{w}))\}$ and the set $\mathcal{X}\setminus \mathcal{T}$, i.e., 
\begin{equation}
\widehat{\bm{X}}^{\bm{x}_0}(t,\bm{w})=
\begin{cases}
&\bm{X}^{\bm{x}_0}(t,\bm{w}), \text{\rm~if~}t<\tau^{\bm{x}_0}(\bm{w}),\\
&\bm{X}^{\bm{x}_0}(\tau^{\bm{x}_0}(\bm{w}),\bm{w}), \text{\rm~if~}t\geq \tau^{\bm{x}_0}(\bm{w}),
\end{cases}
\end{equation}
where \[\tau^{\bm{x}_0}(\bm{w})=\inf\{t\mid \bm{X}^{\bm{x}_0}(t,\bm{w}) \in \partial \mathcal{X} \bigvee \bm{X}^{\bm{x}_0}(t,\bm{w}) \in \mathcal{T}\}\] is the first time of exit of $\bm{X}^{\bm{x}_0}(t,\bm{w})$ from the open set $\mathcal{X}\setminus \mathcal{T}$. It is worth remarking here that if the path $\bm{X}^{\bm{x}_0}(t,\bm{w})$ escapes to infinity in finite time, it must touch the boundary of the bounded safe set $\mathcal{X}$ and thus $\tau^{\bm{x}_0}(\bm{w})\leq T^{\bm{x}_0}(\bm{w})$. The stopped process $\widehat{\bm{X}}^{\bm{x}_0}(t,\bm{w})$ inherits the right continuity and strong Markovian property of $\bm{X}^{\bm{x}_0}(t,\bm{w})$. Moreover, the infinitesimal generator corresponding to $\widehat{\bm{X}}^{\bm{x}_0}(t,\bm{w})$ is identical to the one corresponding to $\bm{X}^{\bm{x}_0}(t,\bm{w})$ on the set $ \mathcal{X}\setminus \mathcal{T}$, and is equal to zero outside of the set $\mathcal{X}\setminus \mathcal{T}$ \cite{kushner1967}. That is, for $v(\bm{x})\in \mathcal{C}^2(\mathbb{R}^n)$, 
\[\mathcal{A}v(\bm{x})=\sum_{i} b_i(\bm{x}) \frac{\partial v(\bm{x})}{\partial x_i}+\frac{1}{2}\sum_{i,j}(\bm{\sigma}\bm{\sigma}^{\top})_{ij}(\bm{x})\frac{\partial^2 v(\bm{x})}{\partial x_i \partial x_j}\] for $\bm{x}\in \mathcal{X}\setminus \mathcal{T}$ and \[\mathcal{A}v(\bm{x})=0\] 
for $\bm{x}\in \partial \mathcal{X}\cup \mathcal{T}$. This will be implicitly assumed throughout this paper.

We observe that the set $\overline{\mathcal{X}}$ is an invariant set for the stochastic process $\widehat{\mathcal{X}}_t^{\bm{x}_0}(\bm{w})$ with $\bm{x}_0\in \overline{\mathcal{X}}$.
\begin{proposition}
\label{three}
If $\bm{x}_0\in \overline{\mathcal{X}}$ and $\bm{w}\in \Omega$, then  \[\widehat{\bm{X}}^{\bm{x}_0}(t,\bm{w})\in \overline{\mathcal{X}}\] for $t\in \mathbb{R}_{\geq 0}$.
\end{proposition}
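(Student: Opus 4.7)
The plan is to do a simple case analysis on the location of the initial state $\bm{x}_0 \in \overline{\mathcal{X}}$, relying on (i) continuity of sample paths of the It\^{o} diffusion given by \eqref{sde}, and (ii) closedness of the stopping set $\partial \mathcal{X} \cup \mathcal{T}$, which is a finite union of closed sets since $\mathcal{X}$ is open and $\mathcal{T}$ is the closed polynomial sublevel set $\{g\leq 1\}$. The key topological fact is that $\partial \mathcal{X} \cup \mathcal{T} \subseteq \overline{\mathcal{X}}$, because $\partial \mathcal{X}$ is by definition contained in $\overline{\mathcal{X}}$ and $\mathcal{T} \subseteq \mathcal{X}$ by the standing assumption on the target. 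Consequently, if $\tau^{\bm{x}_0}(\bm{w}) < \infty$, continuity of the path together with closedness of $\partial \mathcal{X} \cup \mathcal{T}$ yields
\[
\bm{X}^{\bm{x}_0}(\tau^{\bm{x}_0}(\bm{w}),\bm{w}) \in \partial \mathcal{X} \cup \mathcal{T} \subseteq \overline{\mathcal{X}}.
\]

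Having set this up, I would split the verification into two cases. If $\bm{x}_0 \in \partial \mathcal{X} \cup \mathcal{T}$, then $\tau^{\bm{x}_0}(\bm{w}) = 0$ by the definition of the first hitting time, so $\widehat{\bm{X}}^{\bm{x}_0}(t,\bm{w}) = \bm{x}_0 \in \overline{\mathcal{X}}$ for every $t \geq 0$, as required. If instead $\bm{x}_0 \in \mathcal{X} \setminus \mathcal{T}$ --- an open set, since $\mathcal{X}$ is open and $\mathcal{T}$ is closed --- then path continuity forces $\tau^{\bm{x}_0}(\bm{w}) > 0$. For $t < \tau^{\bm{x}_0}(\bm{w})$ the very definition of $\tau^{\bm{x}_0}(\bm{w})$ precludes $\bm{X}^{\bm{x}_0}(t,\bm{w}) \in \partial \mathcal{X} \cup \mathcal{T}$, and since a continuous trajectory issued from $\mathcal{X} \setminus \mathcal{T}$ cannot leave this open set without crossing $\partial(\mathcal{X} \setminus \mathcal{T}) \subseteq \partial \mathcal{X} \cup \mathcal{T}$, we obtain $\bm{X}^{\bm{x}_0}(t,\bm{w}) \in \mathcal{X} \setminus \mathcal{T} \subseteq \overline{\mathcal{X}}$. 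For $t \geq \tau^{\bm{x}_0}(\bm{w})$, the stopped process equals $\bm{X}^{\bm{x}_0}(\tau^{\bm{x}_0}(\bm{w}),\bm{w})$, which belongs to $\overline{\mathcal{X}}$ by the preparation above.

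The only place I foresee needing care is the possibility that the underlying strong solution explodes before reaching the stopping set, i.e.\ $T^{\bm{x}_0}(\bm{w}) < \infty$. This is addressed by the remark preceding the proposition: any path escaping to infinity in finite time must first cross $\partial \mathcal{X}$, because $\mathcal{X}$ is bounded, and hence $\tau^{\bm{x}_0}(\bm{w}) \leq T^{\bm{x}_0}(\bm{w})$; so the evaluation $\bm{X}^{\bm{x}_0}(\tau^{\bm{x}_0}(\bm{w}),\bm{w})$ is well-defined and the above argument closes. Overall I do not anticipate a substantive analytic obstacle --- the proof is essentially topological, with the boundedness of $\mathcal{X}$ being the one ingredient that rules out escape before the first hit of $\partial \mathcal{X} \cup \mathcal{T}$.
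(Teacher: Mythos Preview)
Your proof is correct and follows essentially the same case decomposition as the paper: first handle $\bm{x}_0 \in \partial\mathcal{X}\cup\mathcal{T}$ (where the process freezes immediately), then $\bm{x}_0 \in \mathcal{X}\setminus\mathcal{T}$ (splitting according to whether and where the path exits). Your version is in fact more explicit than the paper's, which simply lists the three possible trajectory behaviors without invoking path continuity or the closedness of $\partial\mathcal{X}\cup\mathcal{T}$; but the underlying argument is the same.
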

\begin{proof}
Clearly, if $\bm{x}_0\in \mathcal{T} \cup \partial \mathcal{X}$,  \[\widehat{\bm{X}}^{\bm{x}_0}(t,\bm{w})\in \mathcal{T} \cup \partial \mathcal{X}, \forall t\in \mathbb{R}_{\geq 0}\] holds. 

If $\bm{x}_0\in \mathcal{X}\setminus \mathcal{T}$, one of the following three cases hold:
\begin{enumerate}
\item there exists $t\in \mathbb{R}_{\geq 0}$ such that 
\begin{equation*}
    \begin{split}
\big[\forall \tau\in [0,t). &\widehat{\bm{X}}^{\bm{x}_0}(\tau,\bm{w}) \in \mathcal{X}\setminus \mathcal{T}] \bigwedge\\
&[\forall \tau \in [t, \infty). \widehat{\bm{X}}^{\bm{x}_0}(t,\bm{w}) \in \mathcal{T}\big];
\end{split}
\end{equation*}
\item there exists $t\in \mathbb{R}_{\geq 0}$ such that 
\begin{equation*}
    \begin{split}
\big[\forall \tau\in [0,t). &\widehat{\bm{X}}^{\bm{x}_0}(\tau,\bm{w}) \in \mathcal{X}\setminus \mathcal{T}]  \bigwedge \\
&[\forall \tau \in [t, \infty). \widehat{\bm{X}}^{\bm{x}_0}(t,\bm{w}) \in \partial \mathcal{X}\big];
\end{split}
\end{equation*}
\item $\widehat{\bm{X}}^{\bm{x}_0}(t,\bm{w}) \in \mathcal{X}\setminus \mathcal{T}$ for $t\in \mathbb{R}_{\geq 0}$ holds.
\end{enumerate}
Therefore, the conclusion holds.
\end{proof}

From the proof of Proposition \ref{three}, we conclude that all sample paths of the stochastic process $\{\widehat{\bm{X}}^{\bm{x}_0}(t,\bm{w}),t\in \mathbb{R}_{\geq 0}\}$ for $\bm{x}_0\in \overline{\mathcal{X}}$ can be divided into the following three disjoint groups: 
\begin{enumerate}
\item paths entering $\mathcal{T}$ in finite time. 
\item paths entering $\partial \mathcal{X}$ in finite time;
\item paths staying inside $\mathcal{X}\setminus \mathcal{T}$ for all time.
\end{enumerate}

 Given $\bm{x}\in \overline{\mathcal{X}}$, let $\widehat{\tau}_{\mathcal{T}}^{\bm{x}}(\bm{w})$ be the first hitting time of the target set $\mathcal{T}$ for the path $\widehat{\bm{X}}^{\bm{x}}(t,\bm{w}):\mathbb{R}_{\geq 0}\rightarrow \mathbb{R}^n$, i.e., 
\[\widehat{\tau}_{\mathcal{T}}^{\bm{x}}(\bm{w})=\inf \{t\in \mathbb{R}_{\geq 0}\mid \widehat{\bm{X}}^{\bm{x}}(t,\bm{w}) \in \mathcal{T}\}.\] 
Below we show that the $p$-reach-avoid set ${\rm RA}_p$ is equal to the set of initial states such that the first hitting time of the target set $\mathcal{T}$ for the stochastic process $\{\widehat{\bm{X}}^{\bm{x}_0}(t,\bm{w}),t\in \mathbb{R}_{\geq 0}\}$ is less than infinity with probability being larger than $p$. 
\begin{lemma}
\label{hitting1}
${\rm RA}_p=\{\bm{x}\in \overline{\mathcal{X}}\mid \mathbb{P}(\widehat{\tau}_{\mathcal{T}}^{\bm{x}}(\bm{w})<\infty)>p\}$, where ${\rm RA}_p$ is the $p$-reach-avoid set in {\rm Definition} \ref{RNS}.
\end{lemma}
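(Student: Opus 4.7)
The plan is to prove this set identity pathwise. For each $\bm{x}\in\mathcal{X}$ define the events
\[
A_{\bm{x}}=\Bigl\{\exists t\geq 0.\,\bm{X}^{\bm{x}}(t,\bm{w})\in\mathcal{T}\bigwedge\forall\tau\in[0,t].\,\bm{X}^{\bm{x}}(\tau,\bm{w})\in\mathcal{X}\Bigr\}
\]
and $B_{\bm{x}}=\{\widehat{\tau}_{\mathcal{T}}^{\bm{x}}(\bm{w})<\infty\}$. The goal is to show $A_{\bm{x}}=B_{\bm{x}}$ for every $\bm{x}\in\mathcal{X}$ and, separately, that $\mathbb{P}(B_{\bm{x}})=0$ for every $\bm{x}\in\partial\mathcal{X}$. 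Taking probabilities then yields the two descriptions of ${\rm RA}_p$, using that the definition of ${\rm RA}_p$ already restricts to $\bm{x}\in\mathcal{X}$.

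For the inclusion $A_{\bm{x}}\subseteq B_{\bm{x}}$, I would fix $\bm{w}\in A_{\bm{x}}$ and set $t'(\bm{w})=\inf\{s\geq 0\mid \bm{X}^{\bm{x}}(s,\bm{w})\in\mathcal{T}\}$. Because $\mathcal{T}=\{g\leq 1\}$ is closed and sample paths are continuous, $t'(\bm{w})$ is finite and $\bm{X}^{\bm{x}}(t',\bm{w})\in\mathcal{T}$; moreover the path lies in $\mathcal{X}\setminus\mathcal{T}$ throughout $[0,t'(\bm{w}))$. Hence the first exit time $\tau^{\bm{x}}(\bm{w})$ of $\bm{X}^{\bm{x}}$ from $\mathcal{X}\setminus\mathcal{T}$ equals $t'(\bm{w})$, so $\widehat{\bm{X}}^{\bm{x}}(t',\bm{w})=\bm{X}^{\bm{x}}(t',\bm{w})\in\mathcal{T}$ and $\widehat{\tau}_{\mathcal{T}}^{\bm{x}}(\bm{w})\leq t'(\bm{w})<\infty$.

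For the reverse inclusion $B_{\bm{x}}\subseteq A_{\bm{x}}$, I would let $t^{*}=\widehat{\tau}_{\mathcal{T}}^{\bm{x}}(\bm{w})<\infty$ and use the trichotomy from Proposition~1 to identify $t^{*}$ with $\tau^{\bm{x}}(\bm{w})$. Indeed, on $[0,\tau^{\bm{x}}(\bm{w}))$ the stopped process agrees with $\bm{X}^{\bm{x}}$, which lies in $\mathcal{X}\setminus\mathcal{T}$, so no hit of $\mathcal{T}$ is possible there; on $[\tau^{\bm{x}}(\bm{w}),\infty)$ the stopped process is frozen at $\bm{X}^{\bm{x}}(\tau^{\bm{x}}(\bm{w}),\bm{w})$, so membership in $\mathcal{T}$ at any $s>\tau^{\bm{x}}(\bm{w})$ forces $\bm{X}^{\bm{x}}(\tau^{\bm{x}}(\bm{w}),\bm{w})\in\mathcal{T}$ and hence $t^{*}=\tau^{\bm{x}}(\bm{w})$. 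Consequently $\bm{X}^{\bm{x}}(s,\bm{w})\in\mathcal{X}\setminus\mathcal{T}\subseteq\mathcal{X}$ for $s\in[0,t^{*})$ and $\bm{X}^{\bm{x}}(t^{*},\bm{w})\in\mathcal{T}\subseteq\mathcal{X}$, which places $\bm{w}$ in $A_{\bm{x}}$.

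It remains to dispose of boundary initial states. If $\bm{x}\in\partial\mathcal{X}$ then $\tau^{\bm{x}}(\bm{w})=0$, so $\widehat{\bm{X}}^{\bm{x}}(t,\bm{w})\equiv\bm{x}$; openness of $\mathcal{X}$ together with $\mathcal{T}\subseteq\mathcal{X}$ forces $\partial\mathcal{X}\cap\mathcal{T}=\emptyset$, hence $\bm{x}\notin\mathcal{T}$ and $\mathbb{P}(B_{\bm{x}})=0$, which is never strictly greater than $p\in[0,1)$. The main obstacle I anticipate is the pathwise argument pinning $t^{*}$ to $\tau^{\bm{x}}(\bm{w})$ and ruling out the other two cases of the trichotomy; this relies essentially on path continuity together with closedness of $\mathcal{T}$ and openness of $\mathcal{X}$, and on the fact that the stopped process is constant past $\tau^{\bm{x}}(\bm{w})$.
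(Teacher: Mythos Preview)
Your pathwise argument is correct. The paper's own proof is terser and factors through an intermediate event: it first cites an external result (Proposition~5 in \cite{bujorianu2007new}) to identify $\mathbb{P}(\widehat{\tau}_{\mathcal{T}}^{\bm{x}}<\infty)$ with $\mathbb{P}\bigl(\exists t\geq 0.\,\widehat{\bm{X}}^{\bm{x}}(t,\bm{w})\in\mathcal{T}\bigr)$, and then simply asserts that the latter equals the reach-avoid probability ``according to the relationship between'' the stopped and unstopped processes. Your two inclusions $A_{\bm{x}}\subseteq B_{\bm{x}}$ and $B_{\bm{x}}\subseteq A_{\bm{x}}$ are precisely a self-contained unpacking of that second step, using path continuity, closedness of $\mathcal{T}$, openness of $\mathcal{X}$, and the freezing of the stopped process past $\tau^{\bm{x}}$; the cited first step you bypass altogether, since it is just the trivial equivalence between a closed set's hitting time being finite and the continuous process ever visiting it. You also handle the boundary case $\bm{x}\in\partial\mathcal{X}$ explicitly via $\partial\mathcal{X}\cap\mathcal{T}=\emptyset$, which the paper's proof leaves implicit. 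The net effect is the same conclusion by the same underlying mechanism, but your route is elementary and avoids the external citation.
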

\begin{proof}
According to Proposition 5 in \cite{bujorianu2007new}, we have $\mathbb{P}(\widehat{\tau}_{\mathcal{T}}^{\bm{x}}(\bm{w})<\infty)=\mathbb{P}(\exists t\in \mathbb{R}_{\geq 0}.\widehat{\bm{X}}^{\bm{x}}(t,w)\in \mathcal{T})$. According to the relationship between stochastic processes $\{\widehat{\bm{X}}^{\bm{x}}(t,\bm{w}), t\in [0,\infty)\}$ and $\{\bm{X}^{\bm{x}}(t,\bm{w}),t\in [0,T^{\bm{x}}(\bm{w}))\}$, we have that $\mathbb{P}(\exists t\in \mathbb{R}_{\geq 0}.\widehat{\bm{X}}^{\bm{x}}(t,w)\in \mathcal{T})=\mathbb{P}\Big(\exists t \in  \mathbb{R}_{\geq 0}.\big[ \bm{X}^{\bm{x}}(t,\bm{w})\in \mathcal{T} \bigwedge \forall \tau\in [0,t]. \bm{X}^{\bm{x}}(\tau,\bm{w})\in \mathcal{X}\big]\Big)$. Therefore, ${\rm RA}_p=\{\bm{x}\in \overline{\mathcal{X}}\mid \mathbb{P}(\widehat{\tau}_{\mathcal{T}}^{\bm{x}}(\bm{w})<\infty)> p\}.$ 
\end{proof}

Now, we present the bounded value function $V(\bm{x}):\overline{\mathcal{X}}\rightarrow \mathbb{R}$, which can be regarded as an ergodic occupation measure or `long-run average’ (e.g., \cite{bujorianu2012stochastic}) and whose strict $p$ super-level set, i.e., $\{\bm{x}\in \overline{\mathcal{X}}\mid V(\bm{x})> p\}$, is equal to the $p$-reach-avoid set ${\rm RA}_p$, as shown in Lemma \ref{value_reach}.
\begin{equation}
\label{value}
V(\bm{x}):=\lim_{t \rightarrow \infty}\frac{\mu([0,t] \times \mathcal{T}\mid \bm{x})}{t},
\end{equation}
where $\mu([0,t] \times \mathcal{T}\mid \bm{x})=E[\int_{0}^t 1_{\mathcal{T}}(\widehat{\bm{X}}^{\bm{x}}(\tau,\bm{w}))d\tau]$ is an occupation measure \cite{bujorianu2012stochastic}, $1_{\mathcal{X}}(\cdot): \mathcal{T}\rightarrow \{0,1\}$ represents the indicator function of the set $\mathcal{T}$, i.e., \[1_{\mathcal{T}}(\bm{x}):=\begin{cases}
   1, \quad \text{if }\bm{x}\in \mathcal{T},\\
   0, \quad \text{if }\bm{x}\notin \mathcal{T}.
\end{cases}\]
Based on occupation measures, \cite{henrion2021moment} investigated the exit time problem of polynomial SDEs using the so-called Lasserre or moment sum of squares hierarchy \cite{lasserre2009moments}. Since $0 \leq 1_{\mathcal{T}}(\bm{x})\leq 1$ over $\mathbb{R}^n$, $0\leq V(\bm{x})\leq 1 \text{~for~} \bm{x}\in \overline{\mathcal{X}}$ and thus $V(\bm{x})$ is bounded over $\overline{\mathcal{X}}$. It is worth remarking here that $\lim_{t \rightarrow \infty}\frac{\mu([0,t] \times \mathcal{T}\mid \bm{x})}{t}$ exists, since $\lim_{t \rightarrow \infty}\frac{\mu([0,t] \times \mathcal{T}\mid \bm{x})}{t}=\sup_{t\in \mathbb{R}_{\geq 0}} E[1_{\mathcal{T}}(\widehat{X}^{\bm{x}}(\tau,\bm{w}))]$, which can also be justified by Proposition 6 in \cite{bujorianu2007new}.

\begin{lemma}
\label{value_reach}
${\rm RA}_p=\{\bm{x}\in \overline{\mathcal{X}} \mid V(\bm{x})> p\}$, where $V(\cdot): \overline{\mathcal{X}}\rightarrow [0,1]$ is the value function in \eqref{value}.
\end{lemma}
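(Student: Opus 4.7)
The plan is to show $V(\bm{x}) = \mathbb{P}\!\left(\widehat{\tau}_{\mathcal{T}}^{\bm{x}}(\bm{w}) < \infty\right)$ for every $\bm{x}\in\overline{\mathcal{X}}$ and then invoke Lemma~\ref{hitting1}. The conclusion $\mathrm{RA}_p = \{\bm{x}\in\overline{\mathcal{X}}\mid V(\bm{x})>p\}$ then follows immediately because Lemma~\ref{hitting1} already identifies $\mathrm{RA}_p$ with the strict $p$ super-level set of $\bm{x}\mapsto\mathbb{P}(\widehat{\tau}_{\mathcal{T}}^{\bm{x}}(\bm{w})<\infty)$.

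The central structural fact driving the computation of $V$ is that the stopped process $\widehat{\bm{X}}^{\bm{x}}(\cdot,\bm{w})$ is absorbed as soon as it reaches $\mathcal{T}\cup\partial\mathcal{X}$. Hence, the trichotomy of sample paths established after Proposition~\ref{three} gives the pathwise identity
\begin{equation*}
\int_{0}^{t}\mathbf{1}_{\mathcal{T}}\!\left(\widehat{\bm{X}}^{\bm{x}}(\tau,\bm{w})\right)d\tau =
\bigl(t-\widehat{\tau}_{\mathcal{T}}^{\bm{x}}(\bm{w})\bigr)^{+}\,\mathbf{1}_{\{\widehat{\tau}_{\mathcal{T}}^{\bm{x}}(\bm{w})<\infty\}},
\end{equation*}
because on paths that eventually hit $\mathcal{T}$ the integrand is $0$ before $\widehat{\tau}_{\mathcal{T}}^{\bm{x}}$ and $1$ afterwards (by absorption), while on the other two types of paths the integrand is identically zero.

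I would then divide by $t$ and take $t\to\infty$. Pointwise in $\bm{w}$, the ratio $(t-\widehat{\tau}_{\mathcal{T}}^{\bm{x}}(\bm{w}))^{+}/t$ converges to $1$ on $\{\widehat{\tau}_{\mathcal{T}}^{\bm{x}}<\infty\}$ and is $0$ on the complement, so the integrand converges almost surely to $\mathbf{1}_{\{\widehat{\tau}_{\mathcal{T}}^{\bm{x}}(\bm{w})<\infty\}}$. Since the normalised integrand is bounded by $1$, the bounded convergence theorem lets me exchange limit and expectation, yielding
\begin{equation*}
V(\bm{x})=\lim_{t\to\infty}E\!\left[\frac{1}{t}\int_{0}^{t}\mathbf{1}_{\mathcal{T}}\!\left(\widehat{\bm{X}}^{\bm{x}}(\tau,\bm{w})\right)d\tau\right] = \mathbb{P}\!\left(\widehat{\tau}_{\mathcal{T}}^{\bm{x}}(\bm{w})<\infty\right).
\end{equation*}
Combining this with Lemma~\ref{hitting1} gives the desired set equality.

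The only potentially delicate point is justifying the absorption identity at $\tau=\widehat{\tau}_{\mathcal{T}}^{\bm{x}}$ on a null set of exceptional sample points, and confirming that $\widehat{\tau}_{\mathcal{T}}^{\bm{x}}$ is well defined as a (possibly infinite) stopping time on paths that escape through $\partial\mathcal{X}$ without ever visiting $\mathcal{T}$; both are handled by interpreting $\widehat{\tau}_{\mathcal{T}}^{\bm{x}}(\bm{w})=\infty$ in that case, which is consistent with the convention $\inf\emptyset=\infty$ and with Proposition~\ref{three}. Everything else is a direct consequence of the absorbing nature of the stopped process and bounded convergence, so I expect the argument to be short once the pathwise integral identity above is written down.
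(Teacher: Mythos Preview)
Your argument is correct and reaches the same key identity $V(\bm{x})=\mathbb{P}(\widehat{\tau}_{\mathcal{T}}^{\bm{x}}<\infty)$ that the paper establishes, but by a different route. The paper first applies Fubini to rewrite the occupation measure as $\tfrac{1}{t}\int_{0}^{t}\mathbb{P}(\widehat{\bm{X}}^{\bm{x}}(\tau,\bm{w})\in\mathcal{T})\,d\tau$, and then invokes a separate lemma (Lemma~\ref{eq}, proved via external references) stating that $\mathbb{P}(\widehat{\bm{X}}^{\bm{x}}(t,\bm{w})\in\mathcal{T})\to\mathbb{P}(\widehat{\tau}_{\mathcal{T}}^{\bm{x}}<\infty)$, followed by an implicit Ces\`aro-mean argument. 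You instead compute the time integral \emph{pathwise} using the absorbing structure of the stopped process, obtaining the explicit formula $(t-\widehat{\tau}_{\mathcal{T}}^{\bm{x}})^{+}\mathbf{1}_{\{\widehat{\tau}_{\mathcal{T}}^{\bm{x}}<\infty\}}$, and then pass to the limit under the expectation via bounded convergence. Your approach is more self-contained: it avoids Lemma~\ref{eq} and its external citations entirely, and it replaces the Fubini-plus-Ces\`aro step with a single application of bounded convergence. The paper's route, on the other hand, isolates the convergence of $\mathbb{P}(\widehat{\bm{X}}^{\bm{x}}(t,\bm{w})\in\mathcal{T})$ as a fact of independent interest.
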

\begin{proof}
According to Lemma \ref{hitting1}, we just need to prove that 
\[V(\bm{x})=\mathbb{P}(\widehat{\tau}_{\mathcal{T}}^{\bm{x}}(\bm{w})<\infty).\]

For $t\in \mathbb{R}_{\geq 0}$, according to Fubini's theorem \cite{rosenthal2006first}, we have 
 \[\frac{E[\int_{0}^t 1_{\mathcal{T}}(\widehat{\bm{X}}^{\bm{x}}(\tau,\bm{w}))d\tau]}{t}=\frac{\int_{0}^t \mathbb{P}(\widehat{\bm{X}}^{\bm{x}}(\tau,\bm{w})\in \mathcal{T}) d\tau }{t}.\] Therefore, \[V(\bm{x})=\lim_{t\rightarrow \infty} \frac{\int_{0}^t \mathbb{P}(\widehat{\bm{X}}^{\bm{x}}(\tau,\bm{w})\in \mathcal{T}) d\tau }{t}.\] According to Lemma \ref{eq} shown below, we have \[\lim_{t\rightarrow \infty}\mathbb{P}(\widehat{\bm{X}}^{\bm{x}}(t,\bm{w})\in \mathcal{T})=\mathbb{P}(\widehat{\tau}_{\mathcal{T}}^{\bm{x}}(\bm{w})<\infty).\] 
As a consequence, \[V(\bm{x})=\mathbb{P}(\widehat{\tau}_{\mathcal{T}}^{\bm{x}}(\bm{w})<\infty).\]

Therefore, \[{\rm RA}_p=\{\bm{x}\in \overline{\mathcal{X}} \mid V(\bm{x})> p\}\] according to Lemma \ref{hitting1}.
\end{proof}

\begin{lemma}
\label{eq}
If $\bm{x}_0\in \overline{\mathcal{X}}$, then 
\[\lim_{t\rightarrow \infty}\mathbb{P}(\widehat{\bm{X}}^{\bm{x}_0}(t,\bm{w})\in \mathcal{T})=\mathbb{P}(\widehat{\tau}_{\mathcal{T}}^{\bm{x}_0}(\bm{w})<\infty).\]
\end{lemma}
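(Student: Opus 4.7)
My strategy is to exploit the fact that the stopped process freezes once it hits $\mathcal{T}$, thereby reducing the claimed limit identity to a continuity-of-measure statement. The pivotal intermediate claim I aim to establish is the pathwise event equality
\[\{\widehat{\bm{X}}^{\bm{x}_0}(t,\bm{w})\in\mathcal{T}\} \;=\; \{\widehat{\tau}_{\mathcal{T}}^{\bm{x}_0}(\bm{w})\le t\}\]
for every $t\ge 0$. Once this is in place, the lemma follows immediately by $\sigma$-additivity: the sets on the right are monotonically increasing in $t$, with union $\{\widehat{\tau}_{\mathcal{T}}^{\bm{x}_0}<\infty\}$, so continuity of $\mathbb{P}$ from below gives the limit.

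To prove the event equality, I would read off the definition of the stopped process: for $t<\tau^{\bm{x}_0}(\bm{w})$ the sample path agrees with $\bm{X}^{\bm{x}_0}(t,\bm{w})$ and therefore lies in $\mathcal{X}\setminus\mathcal{T}$, while for $t\ge\tau^{\bm{x}_0}(\bm{w})$ the path is frozen at the value $\bm{X}^{\bm{x}_0}(\tau^{\bm{x}_0}(\bm{w}),\bm{w})\in\partial\mathcal{X}\cup\mathcal{T}$. The inclusion $\supseteq$ is then immediate from the absorbing nature of $\mathcal{T}$ under the stopped dynamics: once the stopped path is in $\mathcal{T}$, it remains there for all later times, so $\widehat{\tau}_{\mathcal{T}}^{\bm{x}_0}(\bm{w})$ coincides with that entry time and is at most $t$. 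The reverse inclusion $\subseteq$ amounts to ruling out the configuration $\widehat{\bm{X}}^{\bm{x}_0}(t,\bm{w})\in\mathcal{T}$ with $\widehat{\tau}_{\mathcal{T}}^{\bm{x}_0}(\bm{w})>t$, which would require the stopping at time $\tau^{\bm{x}_0}(\bm{w})\le t$ to have occurred on $\partial\mathcal{X}$ rather than on $\mathcal{T}$, contradicting the observation that the frozen value lies in $\mathcal{T}$.

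The one step that needs a geometric justification is precisely this disjointness argument. It is handled by noting that $\mathcal{T}\subseteq\mathcal{X}$ with $\mathcal{X}$ open, hence $\mathcal{T}\cap\partial\mathcal{X}=\emptyset$; the frozen value thus cannot lie simultaneously on $\partial\mathcal{X}$ and in $\mathcal{T}$, forcing the stopping to have happened on $\mathcal{T}$. With the event equality secured, writing $\{\widehat{\tau}_{\mathcal{T}}^{\bm{x}_0}<\infty\}=\bigcup_{n\in\mathbb{N}}\{\widehat{\tau}_{\mathcal{T}}^{\bm{x}_0}\le n\}$ and applying continuity of $\mathbb{P}$ yields
\[\lim_{t\to\infty}\mathbb{P}(\widehat{\bm{X}}^{\bm{x}_0}(t,\bm{w})\in\mathcal{T})=\lim_{t\to\infty}\mathbb{P}(\widehat{\tau}_{\mathcal{T}}^{\bm{x}_0}\le t)=\mathbb{P}(\widehat{\tau}_{\mathcal{T}}^{\bm{x}_0}<\infty),\]
which is the desired conclusion. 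No stochastic machinery beyond the absorbing-stopping construction already given in the paper is needed; the proof is essentially a careful case analysis on the three disjoint path behaviors identified after Proposition \ref{three} followed by elementary measure continuity.
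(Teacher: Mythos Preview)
Your argument is correct. The pathwise event equality
\[\{\widehat{\bm{X}}^{\bm{x}_0}(t,\bm{w})\in\mathcal{T}\}=\{\widehat{\tau}_{\mathcal{T}}^{\bm{x}_0}(\bm{w})\le t\}\]
indeed holds for every $t\ge 0$, and your case analysis (using $\mathcal{T}\subseteq\mathcal{X}$ with $\mathcal{X}$ open to get $\mathcal{T}\cap\partial\mathcal{X}=\emptyset$, so the frozen value cannot ambiguously lie in both) is the right way to see it. Continuity from below then finishes the proof.

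Your route, however, differs from the paper's. The paper does not argue pathwise at all: it simply invokes two external results, namely Corollary~1 of \cite{tkachev2011infinite} for the identity $\mathbb{P}(\exists t\ge 0.\,\widehat{\bm{X}}^{\bm{x}_0}(t,\bm{w})\in\mathcal{T})=\lim_{t\to\infty}\mathbb{P}(\widehat{\bm{X}}^{\bm{x}_0}(t,\bm{w})\in\mathcal{T})$ and Proposition~5 of \cite{bujorianu2007new} for $\mathbb{P}(\widehat{\tau}_{\mathcal{T}}^{\bm{x}_0}<\infty)=\mathbb{P}(\exists t\ge 0.\,\widehat{\bm{X}}^{\bm{x}_0}(t,\bm{w})\in\mathcal{T})$, and chains them. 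Your approach is more elementary and fully self-contained: it uses only the absorbing structure of the stopped process already set up in the paper and the disjointness $\mathcal{T}\cap\partial\mathcal{X}=\emptyset$, avoiding any appeal to outside literature. The paper's citation-based proof is shorter on the page but opaque; yours makes explicit why the limit holds and would in fact supply a direct proof of the cited statements in this specific absorbing setting.
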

\begin{proof}
According to Corollary 1 in \cite{tkachev2011infinite} stating that $\mathbb{P}(\exists t\in \mathbb{R}_{\geq 0}.  \widehat{X}^{\bm{x}_0}(t,\bm{w})\in \mathcal{T})=\lim_{t\rightarrow \infty}P(\widehat{X}^{\bm{x}_0}(t,\bm{w})\in \mathcal{T})$, and Proposition 5 in \cite{bujorianu2007new} stating that $\mathbb{P}(\widehat{\tau}_{\mathcal{T}}^{\bm{x}_0}(\bm{w})<\infty)=\mathbb{P}(\exists t\in \mathbb{R}_{\geq 0}.  \widehat{X}^{\bm{x}_0}(t,\bm{w})\in \mathcal{T})$, we have the conclusion.
\end{proof}

From Lemma \ref{value_reach} we conclude that the exact $p$-reach-avoid set can be obtained if the value function $V(\bm{x})$ in \eqref{value} is computed. However, it is challenging, even impossible to compute it directly since it involves the knowledge of analytical solutions to SDE \eqref{sde}, which cannot be gained generally, especially for nonlinear systems. In order to address  this issue, we go further and show that it is the uniquely twice continuously differentiable solution to a system of elliptic partial differential equations (e.g., \cite{gilbarg2015elliptic}) if such a solution exists, as formulated in Theorem \ref{eq1} below. It is worth noting here that the derived system of equations is different from Hamilton-Jacobi equations in \cite{Koutsoukos2008} or the variational inequalities in \cite{bujorianu2007new}.
\begin{theorem}
\label{eq1}
If there exist $v(\bm{x})\in \mathcal{C}^2(\overline{\mathcal{X}})$ and $u(\bm{x})\in \mathcal{C}^2(\overline{\mathcal{X}})$ such that  for $\bm{x}\in \overline{\mathcal{X}}$, 
\begin{align}
&\mathcal{A}v(\bm{x})=0,\label{con1}\\
&v(\bm{x})=1_{\mathcal{T}}(\bm{x})+\mathcal{A}u(\bm{x}),\label{con2}
\end{align}
then \[v(\bm{x})=V(\bm{x}), \forall \bm{x}\in \overline{\mathcal{X}}\] and thus \[{\rm RA}_p=\{\bm{x}\in \overline{\mathcal{X}}\mid v(\bm{x})> p\},\] where $V(\cdot): \overline{\mathcal{X}}\rightarrow \mathbb{R}$ is the value function in \eqref{value}. 
\end{theorem}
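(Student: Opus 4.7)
The overall strategy is to identify $v(\bm{x})$ with the value function $V(\bm{x})$ from \eqref{value} by applying Dynkin's formula to $v$ and $u$ along the stopped process $\widehat{\bm{X}}^{\bm{x}}(t,\bm{w})$, and then to invoke Lemma \ref{value_reach} to translate this pointwise equality into the claimed characterization of ${\rm RA}_p$. Since $\overline{\mathcal{X}}$ is compact (the closure of a bounded open set) and the stopped process takes values in $\overline{\mathcal{X}}$ by Proposition \ref{three}, the bounded-domain variant of Dynkin's formula noted after Theorem \ref{dyn_theom} is available for any $f \in \mathcal{C}^2(\overline{\mathcal{X}})$ with deterministic stopping time $t$. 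Concretely, I would first establish the identity
\begin{equation*}
E[f(\widehat{\bm{X}}^{\bm{x}}(t,\bm{w}))] = f(\bm{x}) + E\Bigl[\int_0^t \mathcal{A}f(\widehat{\bm{X}}^{\bm{x}}(s,\bm{w}))\,ds\Bigr]
\end{equation*}
by applying the usual Dynkin identity to the unstopped process with stopping time $\tau^{\bm{x}}\wedge t$ (which has finite expectation since it is bounded by $t$), and observing that $\mathcal{A}f\equiv 0$ on $\partial\mathcal{X}\cup\mathcal{T}$ by the paper's convention, so that the integral against the unstopped generator up to $\tau^{\bm{x}}\wedge t$ is identical to the integral against the stopped generator over $[0,t]$.

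Applying this identity to $f=v$ and using hypothesis \eqref{con1} immediately yields $E[v(\widehat{\bm{X}}^{\bm{x}}(t,\bm{w}))] = v(\bm{x})$ for every $t\geq 0$. Applying it to $f=u$ and substituting $\mathcal{A}u = v - 1_{\mathcal{T}}$ from \eqref{con2}, then using Fubini's theorem to exchange the expectation with the time integral, together with the constant-expectation identity for $v$ just obtained, gives
\begin{equation*}
E[u(\widehat{\bm{X}}^{\bm{x}}(t,\bm{w}))] = u(\bm{x}) + t\,v(\bm{x}) - \mu([0,t]\times\mathcal{T}\mid \bm{x}).
\end{equation*}
Rearranging and dividing by $t$ produces the key identity
\begin{equation*}
\frac{\mu([0,t]\times\mathcal{T}\mid \bm{x})}{t} = v(\bm{x}) + \frac{u(\bm{x}) - E[u(\widehat{\bm{X}}^{\bm{x}}(t,\bm{w}))]}{t}.
\end{equation*}
Because $u\in \mathcal{C}^2(\overline{\mathcal{X}})$ is continuous on the compact set $\overline{\mathcal{X}}$, it is bounded there, so both $u(\bm{x})/t$ and $E[u(\widehat{\bm{X}}^{\bm{x}}(t,\bm{w}))]/t$ tend to $0$ as $t\to\infty$. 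Passing to the limit and comparing with \eqref{value} delivers $V(\bm{x}) = v(\bm{x})$ for all $\bm{x}\in\overline{\mathcal{X}}$, whence ${\rm RA}_p = \{\bm{x}\in\overline{\mathcal{X}}\mid v(\bm{x})>p\}$ by Lemma \ref{value_reach}.

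The step I expect to require the most care is the justification of Dynkin's formula for the \emph{stopped} process on a bounded domain: one must check that the remark following Theorem \ref{dyn_theom} legitimately dispenses with the compact-support hypothesis when $f\in\mathcal{C}^2(\overline{\mathcal{X}})$ is not defined on all of $\mathbb{R}^n$, and that collapsing the unstopped-process Dynkin identity with stopping time $\tau^{\bm{x}}\wedge t$ to the displayed stopped-process identity is consistent with the paper's convention $\mathcal{A}f\equiv 0$ on $\partial\mathcal{X}\cup\mathcal{T}$. Once this foundational identity is in place, the remainder of the proof is essentially bookkeeping: invariance of $E[v(\widehat{\bm{X}}^{\bm{x}}(t,\bm{w}))]$ in $t$, Fubini, and the uniform bound on $u$ to annihilate the $1/t$ correction in the limit.
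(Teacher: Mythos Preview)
Your proposal is correct and follows essentially the same route as the paper's proof: apply Dynkin's formula to $v$ along the stopped process to obtain $E[v(\widehat{\bm{X}}^{\bm{x}}(t,\bm{w}))]=v(\bm{x})$, apply it to $u$ (equivalently, integrate \eqref{con2} along the path and use Dynkin on the $\mathcal{A}u$ term) together with Fubini to reach the key identity $v(\bm{x})=\tfrac{1}{t}\mu([0,t]\times\mathcal{T}\mid\bm{x})+\tfrac{1}{t}\bigl(E[u(\widehat{\bm{X}}^{\bm{x}}(t,\bm{w}))]-u(\bm{x})\bigr)$, then let $t\to\infty$ using boundedness of $u$ and invoke Lemma~\ref{value_reach}. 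Your explicit discussion of why Dynkin applies to the stopped process on the bounded domain (via $\tau^{\bm{x}}\wedge t$ and the convention $\mathcal{A}f\equiv 0$ on $\partial\mathcal{X}\cup\mathcal{T}$) is a welcome clarification that the paper only sketches through its remark after Theorem~\ref{dyn_theom}.
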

\begin{proof}
Let $\bm{x}\in \overline{\mathcal{X}}$. According to Proposition \ref{three}, we have that \[\widehat{\bm{X}}^{\bm{x}}(t,\bm{w})\in \overline{\mathcal{X}}\] for $t \in \mathbb{R}_{\geq 0}$ and $\bm{w}\in \Omega$.

From Eq. \eqref{con1} and Theorem \ref{dyn_theom}, together with the fact that any constant time $t\in \mathbb{R}_{\geq 0}$ is a stopping time with $E[t]<\infty$, we have that 
\begin{equation}
\label{con11}
v(\bm{x})=E[v(\widehat{\bm{X}}^{\bm{x}}(t,\bm{w}))], \forall  t \in \mathbb{R}_{\geq 0}.
\end{equation}

From Eq. \eqref{con2}, we have that for $t\in \mathbb{R}_{\geq 0}$,
\begin{equation}
\label{con21}
v(\widehat{\bm{X}}^{\bm{x}}(t,\bm{w}))=1_{\mathcal{T}}(\widehat{\bm{X}}^{\bm{x}}(t,\bm{w}))+\mathcal{A}u(\widehat{\bm{X}}^{\bm{x}}(t,\bm{w})).
\end{equation}

From Eq. \eqref{con21}, we have that for $t\in \mathbb{R}_{\geq 0}$,
\begin{equation*}
\begin{split}
E[\int_{0}^t v(\widehat{\bm{X}}^{\bm{x}}(s,\bm{w})) ds]&=E[\int_{0}^t 1_{\mathcal{T}}(\widehat{\bm{X}}^{\bm{x}}(s,\bm{w})) ds]\\
&+E[\int_{0}^t \mathcal{A}u(\widehat{\bm{X}}^{\bm{x}}(s,\bm{w})) ds]
\end{split}
\end{equation*}
and further according to Fubini's theorem \cite{rosenthal2006first},
\begin{equation*}
\begin{split}
\int_{0}^tE[ v(\widehat{\bm{X}}^{\bm{x}}(s,\bm{w}))] ds&=E[\int_{0}^t 1_{\mathcal{T}}(\widehat{\bm{X}}^{\bm{x}}(s,\bm{w})) ds]\\
&+E[\int_{0}^t \mathcal{A}u(\widehat{\bm{X}}^{\bm{x}}(s,\bm{w})) ds].
\end{split}
\end{equation*}
Consequently, 
\[ 
\begin{split}
v(\bm{x})=&\frac{E[\int_{0}^{t} 1_{\mathcal{T}}(\widehat{\bm{X}}^{\bm{x}}(s,\bm{w})) ds]}{t}\\
&+\frac{E[u(\widehat{\bm{X}}^{\bm{x}} (t,\bm{w}))]-u(\bm{x})}{t}, \forall t\in \mathbb{R}_{\geq 0}.
\end{split}
\]
Since $u(\bm{x})$ is continuously differentiable over $\overline{\mathcal{X}}$, it is bounded over $\bm{x}\in \overline{\mathcal{X}}$. Consequently, 
\[v(\bm{x})=\lim_{t\rightarrow \infty} \frac{E[\int_{0}^{t} 1_{\mathcal{T}}(\widehat{\bm{X}}^{\bm{x}}(s,\bm{w}))ds]}{t}\]
and thus $v(\bm{x})=V(\bm{x})$,  implying further that \[{\rm RA}_p=\{\bm{x}\in \overline{\mathcal{X}} \mid v(\bm{x})>p\}\] from Lemma  \ref{value_reach}.
\end{proof}

\begin{remark}
\label{boundary}
If the boundary condition is known, i.e., the values for both functions $v(\bm{x})$ and $u(\bm{x})$ on the boundary $\partial \overline{\mathcal{X}}$ are known, the problem of solving the system of elliptic partial differential equations \eqref{con1} and \eqref{con2} is a known Dirichlet problem, which originally was posed for Laplace's equation and could be solved using existing methods such as the Perron's method \cite{ishii1989uniqueness}. Unfortunately, we only know the values of the function $v(\bm{x})$ on the boundary $\partial \overline{\mathcal{X}}$, i.e., $v(\bm{x})=0$ for $\bm{x}\in \partial \overline{\mathcal{X}}$, which can be gained from equation \eqref{con2}, the values of the function $u(\bm{x})$ on $\partial \overline{\mathcal{X}}$ is unknown. Besides, even if the boundary condition is known, in order to characterize the $p$-reach-avoid set we need to guarantee that the obtained solution $(v(\bm{x}),u(\bm{x}))$ to the system of equations \eqref{con1} and \eqref{con2} is twice continuously differentiable solution from Theorem \ref{eq1}. Whether $(v(\bm{x}),u(\bm{x}))$ otherwise, i.e., if not twice continuously differentiable, can be used to characterize the p-reach-avoid remains currently open. \qed
\end{remark}

\begin{remark}
 \label{classical_com}
Let's present another result related to the reach probability of leaving the set $\mathcal{X}\setminus \mathcal{T}$ through $\mathcal{T}$. This result can be obtained via Proposition 7.2 in \cite{karatzas2012brownian}.   

\textcolor{red}{Let $v$ be a solution of the Dirichlet problem in the open, bounded domain $\mathcal{X}\setminus \mathcal{T}$:
    \begin{equation}
    \label{classical}
        \begin{split}
        &\mathcal{A} v(\bm{x})=0, \text{~in~}\mathcal{X}\setminus \mathcal{T},\\
        &v(\bm{x})=1, \text{~in~} \mathcal{T}, \\
        &v(\bm{x})=0, \text{~in~}\partial \mathcal{X},
        \end{split}
    \end{equation}
    and let $\widehat{\tau}^{\bm{x}}(\bm{w})=\inf\{t\geq 0; \widehat{\bm{X}}^{\bm{x}}(t,\bm{w})\in \partial \mathcal{T}\cup \partial \mathcal{X}\}$. If 
    \begin{equation}
        E[\widehat{\tau}^{\bm{x}}(\bm{w})]<\infty, \forall \bm{x}\in \mathcal{X}\setminus \mathcal{T}, 
    \end{equation}
    then we have
    \[v(\bm{x})=E[1_{\partial \mathcal{T}}(\widehat{\bm{X}}^{\bm{x}}(\widehat{\tau}^{\bm{x}}(\bm{w}),\bm{w}))], \bm{x}\in \mathcal{X}\setminus \mathcal{T},\]
which is the probability of leaving $\mathcal{X}\setminus \mathcal{T}$ through $\mathcal{T}$ for the process $\widehat{\bm{X}}^{\bm{x}}(t,\bm{w})$. }

\textcolor{red}{Under the condition that 
\[ E[\widehat{\tau}^{\bm{x}}(\bm{w})]<\infty, \forall \bm{x}\in \mathcal{X}\setminus \mathcal{T}, \] which implies that $\mathbb{P}(\widehat{\tau}^{\bm{x}}(\bm{w})<\infty)=1, \forall \bm{x}\in \mathcal{X}\setminus \mathcal{T}$, we have $v(\bm{x})=V(\bm{x}), \forall \bm{x}\in \mathcal{X}\setminus \mathcal{T}$ and thus
\[\{\bm{x}\in \overline{\mathcal{X}}\mid v(\bm{x})>p\}={\rm RA}_p.\]  However, when there exists $\bm{x}_0 \in \mathcal{X}$ such that $E[\widehat{\tau}^{\bm{x}_0}(\bm{w})]=\infty$, we cannot obtain that \[v(\bm{x}_0)=V(\bm{x}_0).\] Consequently, 
$\{\bm{x}\in \overline{\mathcal{X}}\mid v(\bm{x})>p\}={\rm RA}_p$ may not hold. $\{\bm{x}\in \overline{\mathcal{X}}\mid v(\bm{x})>p\}$ may include states such that system \eqref{sde} starting from them will stay inside $\mathcal{X}\setminus \mathcal{T}$ for all the time with probability being larger than $p$, but will reach $\mathcal{T}$ in finite time with probability being smaller than or equal to $p$.  }

\textcolor{red}{In contrast, when relating equations \eqref{con1} and \eqref{con2} to the set ${\rm RA}_p$, Theorem \ref{eq1} does not impose the condition that  $E[\widehat{\tau}^{\bm{x}}(\bm{w})]<\infty, \forall  \bm{x}\in \mathcal{X}\setminus \mathcal{T}$. Therefore, Theorem \ref{eq1} applies to the case with $E[\widehat{\tau}^{\bm{x}}(\bm{w})]=\infty$ and thus is more general. Let's further compare 
equations \eqref{con1} and \eqref{con2} with \eqref{classical}. We first reformulate equations \eqref{con1} and \eqref{con2} in the following equivalent form  
\begin{equation}
    \label{equiv_a}
    \begin{split}
    &\mathcal{A}v(\bm{x})=0, \text{in~}\mathcal{X}\setminus \mathcal{T},\\
    &v(\bm{x})=1, \text{in~}\mathcal{T},\\
    &v(\bm{x})=0, \text{in~}\partial \mathcal{X},\\
    &v(\bm{x})=\mathcal{A}u(\bm{x}), \text{in~}\mathcal{X}\setminus \mathcal{T}.
    \end{split}
\end{equation}
It is easy to find that equations \eqref{con1} and \eqref{con2} have an additional constraint \[v(\bm{x})=\mathcal{A}u(\bm{x}), \text{in~}\mathcal{X}\setminus \mathcal{T},\] comparing with \eqref{classical}. If $E[\widehat{\tau}^{\bm{x}}(\bm{w})]<\infty, \forall \bm{x}\in \mathcal{X}\setminus \mathcal{T}$, this constraint is redundant and can be removed, thus turning equations \eqref{equiv_a} into \eqref{classical}. In this case, according to Dynkin's formula in Theorem \ref{dyn_theom}, $u(\bm{x})$ can take  \[u(\bm{x})=E[u(\widehat{\bm{X}}^{\bm{x}}(\widehat{\tau}^{\bm{x}}(\bm{w}),\bm{w})))-\int_{0}^ {\widehat{\tau}^{\bm{x}}(\bm{w})} v(\widehat{\bm{X}}^{\bm{x}}(s,\bm{w}))) ds].\]
Otherwise, this constraint cannot be removed and its existence ensures that 
$\{\bm{x}\in \overline{\mathcal{X}}\in v(\bm{x})>p\}={\rm RA}_p$.
} 
\qed
\end{remark}

From Theorem \ref{eq1} we have that if we obtain a twice continuously differentiable solution $(v(\bm{x}),u(\bm{x}))$ to equations \eqref{con1} and \eqref{con2} the exact $p$-reach-avoid set ${\rm RA}_p=\{\bm{x}\in \mathcal{X}\mid v(\bm{x})>p\}$ can be gained. However, due to the existence of the indicator function $1_{\mathcal{T}}(\bm{x})$ in \eqref{con2}, we have that $\lim_{\bm{x}\rightarrow \bm{x}_0, \bm{x}\notin \mathcal{T}}v(\bm{x})=\mathcal{A}u(\bm{x}_0)$, which is not equal to $v(\bm{x}_0)=1+\mathcal{A}u(\bm{x}_0)$, where $\bm{x}_0 \in \partial \mathcal{T}$, thus the system of equations \eqref{con1} and \eqref{con2} does not admit twice continuously differentiable solutions $(v(\bm{x}),\bm{u}(\bm{x}))$ generally. Let's take an extreme case: $\bm{b}(\bm{x})=\bm{0}$ for $\bm{x}\in \mathbb{R}^n$ and $\bm{\sigma}(\bm{x})=0$ for $\bm{x}\in \mathbb{R}^n$. We can obtain that if $v(\bm{x})$ is a solution to the system of equations \eqref{con1} and \eqref{con2}, then $v(\bm{x})=1$ for $\bm{x}\in \mathcal{T}$ and $v(\bm{x})=0$ for $\bm{x}\in \overline{\mathcal{X}}\setminus \mathcal{T}$, which implies that $v(\bm{x})\notin \mathcal{C}^2(\overline{\mathcal{X}})$. Despite all this, equations \eqref{con1} and \eqref{con2} play a fundamental role in our method for inner-approximating  the $p$-reach-avoid set. In the subsequent subsection we will introduce a semi-definite programming based method for inner-approximating the $p$-reach-avoid set, which is obtained by relaxing the equations \eqref{con1} and \eqref{con2} into a system of inequalities.

\subsection{Semi-definite Programming Implementation}
\label{SDPI}
In this subsection a semi-definite programming method is presented for inner-approximating the $p$-reach-avoid set ${\rm RA}_p$. 

First, we observe that an inner-approximation could be obtained via solving a system of inequalities, which is derived from Eq. \eqref{con1} and \eqref{con2}.
\begin{corollary}
\label{inequality}
If there exist functions $v(\bm{x})\in \mathcal{C}^2(\overline{\mathcal{X}})$ and $u(\bm{x})\in \mathcal{C}^2(\overline{\mathcal{X}})$ such that for $\bm{x}\in \overline{\mathcal{X}}$, 
\begin{align}
&\mathcal{A}v(\bm{x})\geq 0,\label{con3}\\
&1_{\mathcal{T}}(\bm{x})+\mathcal{A}u(\bm{x})\geq v(\bm{x}),\label{con4}
\end{align}
then \[\{\bm{x}\in \overline{\mathcal{X}}\mid v(\bm{x})> p\}\subseteq {\rm RA}_p\] is an inner-approximation of the $p$-reach-avoid set ${\rm RA}_p$.
\end{corollary}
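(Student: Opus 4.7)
The plan is to mirror the proof of Theorem \ref{eq1}, replacing the two equalities by the corresponding inequalities and tracking the direction carefully so that we end up with $v(\bm{x}) \le V(\bm{x})$ on $\overline{\mathcal{X}}$. Once this is established, the desired inclusion $\{\bm{x}\in\overline{\mathcal{X}}\mid v(\bm{x})>p\}\subseteq\{\bm{x}\in\overline{\mathcal{X}}\mid V(\bm{x})>p\}={\rm RA}_p$ follows immediately from Lemma \ref{value_reach}.

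First, I would fix $\bm{x}\in\overline{\mathcal{X}}$ and invoke Proposition \ref{three} to ensure that $\widehat{\bm{X}}^{\bm{x}}(t,\bm{w})\in\overline{\mathcal{X}}$ for all $t\ge 0$, so that the inequalities \eqref{con3} and \eqref{con4} are in force along every sample path. Since any constant time $t$ is a stopping time with $E[t]<\infty$, and since $v,u\in\mathcal{C}^2(\overline{\mathcal{X}})$ are bounded on the compact set $\overline{\mathcal{X}}$ (so the ``compact support'' hypothesis of Dynkin's formula can be realized as in the remark following Theorem \ref{dyn_theom}), I can apply Dynkin's formula to $v$ and use \eqref{con3} to obtain
\begin{equation*}
E\bigl[v(\widehat{\bm{X}}^{\bm{x}}(t,\bm{w}))\bigr]
= v(\bm{x}) + E\!\left[\int_{0}^{t}\mathcal{A}v(\widehat{\bm{X}}^{\bm{x}}(s,\bm{w}))\,ds\right]
\ge v(\bm{x}),
\end{equation*}
for every $t\in\mathbb{R}_{\ge 0}$.

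Next, evaluating \eqref{con4} along the trajectory $\widehat{\bm{X}}^{\bm{x}}(s,\bm{w})$, taking expectations, integrating over $[0,t]$, and applying Fubini's theorem together with Dynkin's formula to $u$, I obtain
\begin{equation*}
\int_{0}^{t} E\bigl[v(\widehat{\bm{X}}^{\bm{x}}(s,\bm{w}))\bigr]\,ds
\le \mu([0,t]\times\mathcal{T}\mid\bm{x}) + E[u(\widehat{\bm{X}}^{\bm{x}}(t,\bm{w}))] - u(\bm{x}).
\end{equation*}
Combining this with the lower bound from the previous step yields
\begin{equation*}
t\cdot v(\bm{x}) \le \mu([0,t]\times\mathcal{T}\mid\bm{x}) + E[u(\widehat{\bm{X}}^{\bm{x}}(t,\bm{w}))] - u(\bm{x}).
\end{equation*}

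Finally, I divide by $t$ and let $t\to\infty$. Because $u$ is continuous on the compact set $\overline{\mathcal{X}}$, the term $(E[u(\widehat{\bm{X}}^{\bm{x}}(t,\bm{w}))]-u(\bm{x}))/t$ vanishes in the limit, and by the definition \eqref{value} the first quotient converges to $V(\bm{x})$. Hence $v(\bm{x})\le V(\bm{x})$ for every $\bm{x}\in\overline{\mathcal{X}}$, and the inclusion claimed by the corollary follows from Lemma \ref{value_reach}. The only subtle point I anticipate is bookkeeping the direction of the inequalities through the two applications of Dynkin's formula and the exchange of expectation with time-integral via Fubini; these are all routine given that $v$, $u$, $\mathcal{A}v$ and $\mathcal{A}u$ are bounded on $\overline{\mathcal{X}}$, so no genuine obstacle arises.
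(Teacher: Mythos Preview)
Your proposal is correct and follows essentially the same route as the paper's own proof: invoke Proposition~\ref{three}, use Dynkin's formula with \eqref{con3} to get $v(\bm{x})\le E[v(\widehat{\bm{X}}^{\bm{x}}(t,\bm{w}))]$, integrate \eqref{con4} along trajectories with Fubini and Dynkin applied to $u$, combine and pass to the limit $t\to\infty$, and then conclude via Lemma~\ref{value_reach}. The paper additionally remarks that \eqref{con4} forces $v\le 0$ on $\partial\mathcal{X}$ so that the superlevel set lies in $\mathcal{X}$, but this is already implicit in your appeal to Lemma~\ref{value_reach}.
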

\begin{proof}
According to Proposition \ref{three}, 
\[\widehat{\bm{X}}^{\bm{x}}(\tau,\bm{w}) \in \overline{\mathcal{X}}, \forall \tau \in \mathbb{R}_{\geq 0}, \forall\bm{w}\in \Omega,\] if $\bm{x}\in \overline{\mathcal{X}}$.  

Let $\bm{x}_0\in \{\bm{x}\in \overline{\mathcal{X}}\mid v(\bm{x})> p\}
$. From constraint \eqref{con3}, we have that 
\begin{equation}
\label{leq}
v(\bm{x}_0)\leq E[v(\widehat{\bm{X}}^{\bm{x}_0}(t,\bm{w}))], \forall t\in \mathbb{R}_{\geq 0}.
\end{equation}
Also, constraint \eqref{con4} indicates that 
\begin{equation*}
\begin{split}
v(\widehat{\bm{X}}^{\bm{x}_0}(t,\bm{w}))\leq 1_{\mathcal{T}}(\widehat{\bm{X}}^{\bm{x}_0}(t,\bm{w}))+\mathcal{A}u(\widehat{\bm{X}}^{\bm{x}_0}(t,\bm{w}))
\end{split}
\end{equation*}
holds for $t\in \mathbb{R}_{\geq 0}$ and $\bm{w}\in \Omega$.
Thus, we have that  for $t\in \mathbb{R}_{\geq 0}$,
\begin{equation*}
\begin{split}
E[\int_{0}^t v(\widehat{\bm{X}}^{\bm{x}_0}(\tau,\bm{w}))d \tau]&\leq E[\int_{0}^t 1_{\mathcal{T}}(\widehat{\bm{X}}^{\bm{x}_0}(\tau,\bm{w}))d\tau]\\
&+ E[\int_{0}^t \mathcal{A}u(\widehat{\bm{X}}^{\bm{x}_0}(\tau,\bm{w})) d\tau]
\end{split}
\end{equation*}
and thus 
\[
\begin{split}
&\int_{0}^t E[v(\widehat{\bm{X}}^{\bm{x}_0}(\tau,\bm{w}))]d \tau
\\
&\leq E[\int_{0}^t 1_{\mathcal{T}}(\widehat{\bm{X}}^{\bm{x}_0}(\tau,\bm{w}))d\tau]+ E[u(\widehat{\bm{X}}^{\bm{x}_0}(t,\bm{w}))]-u(\bm{x}_0).
\end{split}
\]

Combining with \eqref{leq} we further have that 
\[
\begin{split}
v(\bm{x}_0)&\leq \frac{E[\int_{0}^t 1_{\mathcal{T}}(\widehat{\bm{X}}^{\bm{x}_0}(\tau,\bm{w}))d\tau]}{t}\\
&+ \frac{E[u(\widehat{\bm{X}}^{\bm{x}_0}(t,\bm{w}))]-u(\bm{x}_0)}{t} , \forall t\in \mathbb{R}_{\geq 0}
\end{split}
\]
and thus 
\begin{equation}
\label{ineqa11}
\begin{split}
v(\bm{x}_0)&\leq \lim_{t\rightarrow \infty}\frac{E[\int_{0}^t 1_{\mathcal{T}}(\widehat{\bm{X}}^{\bm{x}_0}(\tau,\bm{w}))d\tau]}{t}~~~~(=V(\bm{x}_0))\\
&+ \lim_{t\rightarrow \infty}\frac{E[u(\widehat{\bm{X}}^{\bm{x}_0}(t,\bm{w}))]-u(\bm{x}_0)}{t}.
\end{split}
\end{equation}

Since $ \lim_{t\rightarrow \infty}\frac{E[u(\widehat{\bm{X}}^{\bm{x}_0}(t,\bm{w}))]-u(\bm{x}_0)}{t}=0$, we have \[p<v(\bm{x}_0)\leq V(\bm{x}_0).\] Also, since constraint \eqref{con4} indicates that \[v(\bm{x})\leq 0\] for $\bm{x}\in \overline{\mathcal{X}}\setminus \mathcal{X}$, $\bm{x}_0\in \mathcal{X}$ holds.
Consequently, $\{\bm{x}\in \overline{\mathcal{X}}\mid v(\bm{x})> p\} \subseteq {\rm RA}_p$.
\end{proof}

\begin{remark}
Although the equations \eqref{con1} and \eqref{con2} do not admit twice continuously differentiable solutions in general, the system of inequalities \eqref{con3} and \eqref{con4} does. The pair that $v(\bm{x})\equiv 0$ and $u(\bm{x})\equiv 0$ for $\bm{x}\in \overline{\mathcal{X}}$ satisfies the system of inequalities \eqref{con3} and \eqref{con4}. \qed
\end{remark}

Corollary \ref{inequality} expresses that an inner-approximation of the $p$-reach-avoid set ${\rm RA}_p$ is provided by a solution $v(\bm{x})\in \mathcal{C}^2(\overline{\mathcal{X}})$ to constraints \eqref{con3} and \eqref{con4}. Below we present a convex programming method for solving constraints \eqref{con3} and \eqref{con4}.

The equivalent constraints without indicator functions of constraints \eqref{con3} and \eqref{con4} are formulated below:
\begin{equation}
\label{upper2}
\begin{split}
&\big[\mathcal{A}v(\bm{x})\geq 0, \forall \bm{x}\in \mathcal{X}\setminus \mathcal{T}\big]\bigwedge \\
&\big[-v(\bm{x})+1_{\mathcal{T}}(\bm{x})+\mathcal{A} u(\bm{x})\geq 0, \forall \bm{x}\in \overline{\mathcal{X}}\big],
\end{split}
\end{equation}
which is  further equivalent to 
\begin{equation}
\label{upper3}
\begin{split}
&\mathcal{A}v(\bm{x}) \geq 0, \forall \bm{x}\in \mathcal{X}\setminus \mathcal{T}, \\
&-v(\bm{x})+\mathcal{A}u(\bm{x})\geq 0, \forall \bm{x}\in \mathcal{X}\setminus \mathcal{T}, \\
&-v(\bm{x})\geq 0, \forall \bm{x}\in \partial \mathcal{X}, \\
&1-v(\bm{x})\geq 0, \forall \bm{x}\in\mathcal{T}. 
\end{split}
\end{equation}

 If functions $v(\bm{x})$ and $u(\bm{x})$ in \eqref{upper3} are further restricted to polynomial functions over $\bm{x}\in \mathbb{R}^n$, we can encode the system of inequalities \eqref{upper3} in the form of sum-of-squares constraints, finally resulting in a semi-definite program \eqref{sos} for inner-approximating the $p$-reach-avoid set ${\rm RA}_p$. 
\begin{algorithm}
\begin{equation}
\label{sos}
\begin{split}
&\max  \bm{c} \cdot \hat{\bm{w}}\\
&\text{s.t.}\\
&\mathcal{A}v(\bm{x})+s_0(\bm{x}) h_0(\bm{x})+s_1(\bm{x}) (1-g(\bm{x}))\in \sum[\bm{x}],\\
&-v(\bm{x})+\mathcal{A}u(\bm{x})+s_2(\bm{x}) h_0(\bm{x})\\
&~~~~~~~~~~~~~~~~~~~~~~~~~~~~+s_3(\bm{x}) (1-g(\bm{x}))\in \sum[\bm{x}],\\
&-v(\bm{x})+p(\bm{x}) h_0(\bm{x}) \in \sum[\bm{x}],\\
&1-v(\bm{x})+s_4(\bm{x}) (g(\bm{x})-1) \in \sum[\bm{x}],
\end{split}
\end{equation}
where $\bm{c}\cdot \widehat{\bm{w}}=\int_{\overline{\mathcal{X}}}v(\bm{x})d\bm{x}$, $\widehat{\bm{w}}$ is the constant vector computed by integrating the monomials in $v(\bm{x})\in \mathbb{R}[\bm{x}]$ over $\overline{\mathcal{X}}$, $\bm{c}$ is the vector composed of unknown coffecients in $v(\bm{x})\in \mathbb{R}[\bm{x}]$; $u(\bm{x}), p(\bm{x})\in \mathbb{R}[\bm{x}]$ and $s_{i}(\bm{x})\in \sum[\bm{x}]$, $i=0,\ldots,4$.
\end{algorithm}

\begin{theorem}
\label{inner1}
Let $(v(\bm{x}),u(\bm{x}))$ be a solution to the semi-definite program \eqref{sos}, then \[\{\bm{x}\in \overline{\mathcal{X}}\mid v(\bm{x})> p\}\] is an inner-approximation of the $p$-reach-avoid set ${\rm RA}_p$.
\end{theorem}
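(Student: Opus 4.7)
The plan is to show that any solution $(v,u)$ (together with auxiliary multipliers $s_0,\ldots,s_4,p$) of the semi-definite program \eqref{sos} automatically satisfies the pointwise inequality system \eqref{upper3}. Once this is established, Corollary \ref{inequality} (whose hypotheses \eqref{con3}, \eqref{con4} are equivalent to \eqref{upper3} as already noted in the excerpt) immediately yields $\{\bm{x}\in\overline{\mathcal{X}}\mid v(\bm{x})>p\}\subseteq\mathrm{RA}_p$. So the entire proof reduces to a sign-chasing exercise, applying the Positivstellensatz idea that SOS polynomials are pointwise nonnegative.

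First I would fix an arbitrary $\bm{x}\in\mathcal{X}\setminus\mathcal{T}$. Since $\mathcal{X}=\{h_0<0\}$ and $\mathcal{T}=\{g\le 1\}$, we have $h_0(\bm{x})<0$ and $1-g(\bm{x})<0$. As $s_0(\bm{x}),s_1(\bm{x})\ge 0$ (being sums of squares), $s_0(\bm{x})h_0(\bm{x})+s_1(\bm{x})(1-g(\bm{x}))\le 0$. The first SOS constraint then gives
\[
0\le \mathcal{A}v(\bm{x})+s_0(\bm{x})h_0(\bm{x})+s_1(\bm{x})(1-g(\bm{x}))\le \mathcal{A}v(\bm{x}),
\]
so $\mathcal{A}v(\bm{x})\ge 0$ on $\mathcal{X}\setminus\mathcal{T}$. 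The identical argument applied to the second SOS constraint yields $-v(\bm{x})+\mathcal{A}u(\bm{x})\ge 0$ on $\mathcal{X}\setminus\mathcal{T}$.

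Next, on $\partial\mathcal{X}$, continuity of the polynomial $h_0$ together with $\mathcal{X}$ being the open set $\{h_0<0\}$ gives $h_0(\bm{x})=0$; thus the third SOS constraint collapses to $-v(\bm{x})\ge 0$, i.e.\ $v(\bm{x})\le 0$ on $\partial\mathcal{X}$. Finally, for $\bm{x}\in\mathcal{T}$, $g(\bm{x})-1\le 0$ and $s_4(\bm{x})\ge 0$, so $s_4(\bm{x})(g(\bm{x})-1)\le 0$, and the fourth SOS constraint yields $1-v(\bm{x})\ge -s_4(\bm{x})(g(\bm{x})-1)\ge 0$, i.e.\ $v(\bm{x})\le 1$ on $\mathcal{T}$. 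Collecting the four implications recovers exactly \eqref{upper3}, after which Corollary \ref{inequality} finishes the proof.

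I do not anticipate a serious obstacle: the argument is a standard SOS-to-semialgebraic-inequality deduction. The only mildly delicate point is the identification of $\partial\mathcal{X}$ with the zero-set $\{h_0=0\}$ (used for the third constraint), which relies on $\mathcal{X}=\{h_0<0\}$ being open together with continuity of the polynomial $h_0$; no additional regularity assumption is needed.
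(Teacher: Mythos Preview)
Your proposal is correct and follows exactly the approach of the paper's own proof: verify that the SOS constraints of \eqref{sos} imply the pointwise inequalities \eqref{upper3} (using that $\partial\mathcal{X}\subseteq\{h_0=0\}$), then invoke Corollary~\ref{inequality}. The paper compresses this into a single sentence, whereas you spell out the sign-chasing for each constraint, but the argument is identical.
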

\begin{proof}
Since $v(\bm{x})$ satisfies constrains in \eqref{sos} and $\partial \mathcal{X}\subseteq \{\bm{x}\in \mathbb{R}^n \mid h_0(\bm{x})=0\}$, we obtain that $v(\bm{x})$ satisfies \eqref{upper3}. Consequently, \[\{\bm{x}\in \overline{\mathcal{X}}\mid v(\bm{x})>p\}\subseteq {\rm RA}_p\] holds from Corollary \ref{inequality}.
\end{proof}

\begin{remark}
\label{determi}
If $\bm{\sigma}(\bm{x})\equiv \bm{0}$ for $\bm{x}\in \mathbb{R}^n$ in {\rm SDE} \eqref{sde}, {\rm SDE} \eqref{sde} is finally reduced to {\rm ODE} \eqref{ODE}:
\begin{equation}
\label{ODE}
\frac{d \bm{x}(t)}{d t}=\bm{b}(\bm{x}(t)),\bm{x}(0)=\bm{x}_0,
\end{equation}
whose solution is denoted by $\bm{X}^{\bm{x}_0}(\cdot): T\rightarrow \mathbb{R}^n$ with initial state $\bm{X}^{\bm{x}_0}(0)=\bm{x}_0$. In this case, if there exist functions $v(\bm{x})\in \mathcal{C}^1(\overline{\mathcal{X}})$ and $u(\bm{x})\in \mathcal{C}^1(\overline{\mathcal{X}})$ satisfying \eqref{upper3}, then
\[\{\bm{x}\in \overline{\mathcal{X}}\mid v(\bm{x})>0\} \subseteq {\rm RA},\]
where {\rm RA} is the reach-avoid set over open time horizons, i.e.,
\begin{equation}
\label{deter_reach}
{\rm RA}=\left\{\bm{x}_0\in \mathbb{R}^n\middle |\;
\begin{aligned}
&\exists t\in \mathbb{R}_{\geq 0}. \big[\bm{X}^{\bm{x}_0}(t)\in \mathcal{T}~~\bigwedge \\
&~~~~~~~~\forall \tau\in [0,t].\bm{X}^{\bm{x}_0}(\tau) \in \mathcal{X}\big]
\end{aligned}
\right\}.
\end{equation}
\begin{proof}
The conclusion can be obtained by following the proof of Corollary \ref{inequality} with small modifications. A brief explanation is given below.  

Taking $\bm{x}_0\in \{\bm{x}\in \overline{\mathcal{X}}\mid v(x)>0\}$ and following the proof of Corollary \ref{inequality} by removing the expectation operators, we obtain
\begin{equation*}
\begin{split}
v(\bm{x}_0)&\leq \lim_{t\rightarrow \infty}\frac{\int_{0}^t 1_{\mathcal{T}}(\widehat{\bm{X}}^{\bm{x}_0}(\tau))d\tau}{t}\\
&+ \lim_{t\rightarrow \infty}\frac{u(\widehat{\bm{X}}^{\bm{x}_0}(t))-u(\bm{x}_0)}{t}, ~~(\text{Corresponding to \eqref{ineqa11}})
\end{split}
\end{equation*}
where $\widehat{\bm{X}}^{\bm{x}_0}(t)=\bm{X}^{\bm{x}_0}(t)$ for $t\leq \widehat{\tau}_{\mathcal{T}}^{\bm{x_0}}$ and $\widehat{\bm{X}}^{\bm{x}_0}(t)=\bm{X}^{\bm{x}_0}(\widehat{\tau}_{\mathcal{T}}^{\bm{x_0}})$ for $t\geq \widehat{\tau}_{\mathcal{T}}^{\bm{x_0}}$ with $\widehat{\tau}_{\mathcal{T}}^{\bm{x_0}}=\inf\{t\mid \bm{X}^{\bm{x}_0}(t) \in \partial \mathcal{X} \bigvee \bm{X}^{\bm{x}_0}(t) \in \mathcal{T}\}$. 

Since $ \lim_{t\rightarrow \infty}\frac{u(\widehat{\bm{X}}^{\bm{x}_0}(t))-u(\bm{x}_0)}{t}=0$ and $v(\bm{x}_0)>0$, $\bm{x}_0\in {\rm RA}$ holds. Thus, $\{\bm{x}\in \overline{\mathcal{X}}\mid v(\bm{x})>0\} \subseteq {\rm RA}$.

Another proof can be found in Proposition 5 in \cite{xue2023reach}.
\end{proof}

\end{remark}

\begin{remark}
\label{UN}
When the target set $\mathcal{T}$ is an unsafe set and an initial set {\rm INI} $\subset \mathcal{X}$ is given, a set of constraints can also be constructed for addressing the classical safety verification problem of {\rm SDE} \eqref{sde} as in \cite{PrajnaJP07} via relaxing the equations \eqref{con1} and \eqref{con2}. That is, we can compute a probability $p$ such that for $\bm{x}_0\in {\rm INI}$,
\begin{equation*}
\begin{split}
&\mathbb{P}\Big(\exists t \in  \mathbb{R}_{\geq 0}.\big[ \bm{X}^{\bm{x}_0}(t,\bm{w})\in \mathcal{T}\bigwedge \forall \tau\in [0,t]. \bm{X}^{\bm{x}_0}(\tau,\bm{w})\in \mathcal{X}\big]\Big)\\
&\leq p.
\end{split}
\end{equation*}

This method is orthogonal to stochastic barrier-certificate methods from \cite{PrajnaJP07}, since our method is derived from the equations \eqref{con1} and \eqref{con2} rather than Doob's martingale inequality as in \cite{PrajnaJP07}. We will compare them in the future work.
\begin{corollary}
\label{inequality1}
If there exist functions $v(\bm{x})\in \mathcal{C}^2(\overline{\mathcal{X}})$ and $u(\bm{x})\in \mathcal{C}^2(\overline{\mathcal{X}})$ such that for $\bm{x}\in \overline{\mathcal{X}}$, 
\begin{align}
&-\mathcal{A}v(\bm{x})\geq 0,\label{con31}\\
&v(\bm{x})\geq 1_{\mathcal{T}}(\bm{x})+\mathcal{A}u(\bm{x}),\label{con41}
\end{align}
%where \[\mathcal{A}v(\bm{x})=\sum_{i} b_i(\bm{x}) \frac{\partial v(\bm{x})}{\partial x_i}+\frac{1}{2}\sum_{i,j}(\bm{\sigma}\bm{\sigma}^{\top})_{ij}(\bm{x})\frac{\partial^2 v(\bm{x})}{\partial x_i \partial x_j}\] and \[\mathcal{A}u(\bm{x})=\sum_{i} b_i(\bm{x}) \frac{\partial v(\bm{x})}{\partial x_i}+\frac{1}{2}\sum_{i,j}(\bm{\sigma}\bm{\sigma}^{\top})_{ij}(\bm{x})\frac{\partial^2 u(\bm{x})}{\partial x_i \partial x_j}\]
%for $\bm{x}\in \mathcal{X}\setminus \mathcal{T}$,
then \[\{\bm{x}\in \overline{\mathcal{X}}\mid v(\bm{x})\leq p\}\subseteq {\rm RA'_p}\] is an inner-approximation of the p-reach-avoid set ${\rm RA'}_p$, 
where 
\[
{\rm RA'}_p=\left\{\bm{x} \in \mathcal{X}\middle|\;
\begin{aligned}
&\mathbb{P}\Big(\exists t \in  \mathbb{R}_{\geq 0}. [\bm{X}^{\bm{x}}(t,\bm{w})\in \mathcal{T}~\bigwedge\\
&~~~~\forall \tau\in [0,t]. \bm{X}^{\bm{x}}(\tau,\bm{w})\in \mathcal{X}]\Big)\leq p
\end{aligned}
\right\}.
\]
\end{corollary}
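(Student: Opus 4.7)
The plan is to mirror the proof of Corollary \ref{inequality}, flipping every inequality and then identifying the resulting asymptotic bound with an upper bound on the hitting probability. First I would fix an arbitrary $\bm{x}_0\in\overline{\mathcal{X}}$ with $v(\bm{x}_0)\leq p$ and apply Dynkin's formula (Theorem \ref{dyn_theom}) to $v$ along the stopped process $\widehat{\bm{X}}^{\bm{x}_0}$ at an arbitrary deterministic time $t\in\mathbb{R}_{\geq 0}$. Since constant times are stopping times with finite expectation and $v\in\mathcal{C}^2(\overline{\mathcal{X}})$ is bounded on the compact invariant set $\overline{\mathcal{X}}$ (Proposition \ref{three}), hypothesis \eqref{con31} ($\mathcal{A}v\leq 0$) immediately yields the reversed analogue of \eqref{leq}, namely $E[v(\widehat{\bm{X}}^{\bm{x}_0}(t,\bm{w}))]\leq v(\bm{x}_0)$ for all $t\geq 0$.

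Next, I would evaluate \eqref{con41} pointwise along the stopped trajectory, integrate on $[0,t]$, take expectations, and apply Fubini's theorem together with a second use of Dynkin's formula on $u$. This produces the dual companion of the key chain in Corollary \ref{inequality}:
\[
\int_0^t E\bigl[v(\widehat{\bm{X}}^{\bm{x}_0}(\tau,\bm{w}))\bigr]\,d\tau \;\geq\; E\Bigl[\int_0^t 1_{\mathcal{T}}(\widehat{\bm{X}}^{\bm{x}_0}(\tau,\bm{w}))\,d\tau\Bigr] + E\bigl[u(\widehat{\bm{X}}^{\bm{x}_0}(t,\bm{w}))\bigr] - u(\bm{x}_0).
\]
Combining this with the first step, dividing by $t$, and sending $t\to\infty$ eliminates the $u$-difference (since $u\in\mathcal{C}^2(\overline{\mathcal{X}})$ is bounded on $\overline{\mathcal{X}}$). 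What remains is $v(\bm{x}_0)\geq V(\bm{x}_0)$ in the notation of \eqref{value}, and Lemmas \ref{hitting1} and \ref{value_reach} then identify $V(\bm{x}_0)$ with the reach-avoid probability appearing in the definition of ${\rm RA'}_p$. So $v(\bm{x}_0)\leq p$ forces that probability to be at most $p$, placing $\bm{x}_0$ in ${\rm RA'}_p$.

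The step I expect to require the most care is the boundary bookkeeping rather than the analytic chain itself. In Corollary \ref{inequality} the constraint \eqref{con4} forces $v\leq 0$ on $\partial\mathcal{X}$ (because $\mathcal{A}u=0$ and $1_{\mathcal{T}}=0$ there), which cleanly excludes $\partial\mathcal{X}$ from the inner-approximation and reconciles the inclusion with ${\rm RA}_p\subseteq\mathcal{X}$. The dual constraint \eqref{con41} yields only $v\geq 0$ on $\partial\mathcal{X}$, so the sublevel set $\{v\leq p\}$ may in principle intersect $\partial\mathcal{X}$ when $p\geq 0$. I would resolve this by observing that for $\bm{x}_0\in\partial\mathcal{X}$ the stopped process is frozen at $\bm{x}_0$ and $\bm{x}_0\notin\mathcal{T}$ (because $\mathcal{T}\subseteq\mathcal{X}$ with $\mathcal{X}$ open), so $\mathbb{P}(\widehat{\tau}_{\mathcal{T}}^{\bm{x}_0}(\bm{w})<\infty)=0\leq p$; such points therefore vacuously satisfy the reach-avoid inequality and can be treated as belonging to the natural $\overline{\mathcal{X}}$-extension of ${\rm RA'}_p$ (equivalently, one reads the claim on $\{\bm{x}\in\mathcal{X}\mid v(\bm{x})\leq p\}$). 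Modulo this interpretational point, the statement is a routine dualisation of Corollary \ref{inequality}.
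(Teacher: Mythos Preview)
Your proposal is correct and follows exactly the route the paper intends: its own proof of Corollary~\ref{inequality1} merely says ``the conclusion can be obtained by following the arguments for Corollary~\ref{inequality},'' and your dualisation (flip \eqref{con3}--\eqref{con4} to \eqref{con31}--\eqref{con41}, obtain $v(\bm{x}_0)\geq V(\bm{x}_0)$ in place of $v(\bm{x}_0)\leq V(\bm{x}_0)$, then invoke Lemma~\ref{value_reach}) is precisely that. Your observation about $\partial\mathcal{X}$ possibly meeting the sublevel set is a genuine nicety the paper glosses over, and your resolution---boundary points have hitting probability $0\leq p$, so they satisfy the defining inequality of ${\rm RA'}_p$ read on $\overline{\mathcal{X}}$---is the natural one.
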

\begin{proof}
The conclusion can be obtained by following the arguments for Corollary \ref{inequality}.
\end{proof}

Obviously, if there exists $p\in [0,1)$ such that \[{\rm INI}\subseteq \{\bm{x}\in \overline{\mathcal{X}}\mid v(\bm{x})\leq p\},\] where $v(\bm{x})$ satisfies Corollary \ref{inequality1}, then 
\begin{equation*}
\begin{split}
   & \mathbb{P}\Big(\exists t \in  \mathbb{R}_{\geq 0}.\big[ \bm{X}^{\bm{x}_0}(t,\bm{w})\in \mathcal{T}\bigwedge \forall \tau\in [0,t]. \bm{X}^{\bm{x}_0}(\tau,\bm{w})\in \mathcal{X}\big]\Big)\\
   &\leq p
\end{split}
\end{equation*}
holds for $\bm{x}_0\in {\rm INI}$.

According to Corollary \ref{inequality1}, the safety verification problem can be encoded into the problem of solving the following constraints:
\begin{equation}
\label{safety_verification}
   \begin{split}
   &v(\bm{x})\leq p, \forall \bm{x}\in {\rm INI},\\
    &v(\bm{x})\geq 1, \forall \bm{x}\in \mathcal{T},\\
       &v(\bm{x})\geq \mathcal{A}u(\bm{x}), \forall \bm{x}\in \mathcal{X}\setminus \mathcal{T},\\
       &\mathcal{A}v(\bm{x})\leq 0, \forall \bm{x}\in \mathcal{X},\\
       &v(\bm{x})\geq 0, \forall \bm{x}\in \partial \mathcal{X}.
   \end{split} 
\end{equation}
Comparing the set of constraints \eqref{safety_verification} and constraints (26)-(29) in \cite{PrajnaJP07}, the main difference between them lies in that the former uses the constraint  $v(\bm{x})\geq \mathcal{A}u(\bm{x}), \forall \bm{x}\in \mathcal{X}\setminus \mathcal{T}$ rather than 
$v(\bm{x})\geq 0, \forall \bm{x}\in \mathcal{X}$(It implies $v(\bm{x})\geq 0, \forall \bm{x}\in \partial \mathcal{X}$). Therefore, the set of constraints \eqref{safety_verification} is more expressive than constraints (26)-(29) in \cite{PrajnaJP07}, which is a special instance of the set of constraints \eqref{safety_verification} with $u(\bm{x})\equiv 0$ for $\bm{x}\in \mathcal{X}$. \qed

\end{remark}
\section{Examples}
\label{sec:exam}
In this section we demonstrate on several examples the performance of our approach exploiting semi-definite programming. All computations solving \eqref{sos} were performed on an i7-7500U 2.70GHz CPU with 32GB RAM running Windows 10, where the sum-of-squares module of YALMIP \cite{lofberg2004} was used to transform the sum-of-squares optimization problem \eqref{sos} into a semi-definite program and the solver Mosek \cite{mosek2015mosek} was used to solve the resulting semi-definite program. The parameters controlling the performance of our semi-definite programming approach are presented in Table \ref{table1}. 

\begin{table}
\begin{center}
\begin{tabular}{|c|c|c|c|c|c|c|c|}
  \hline
  \multirow{1}{*}{}&\multicolumn{5}{|c|}{\texttt{SDP} \eqref{sos}} \\\hline
    Ex.&$d_{v}$&$d_{u}$&$d_{s}$&$d_p$&$T$\\\hline
   1    &8&8&8&8&1.78\\\hline
   2&   16 &16&16&16 &4.39\\\hline
   3    &16&16&16&16 &3.78\\\hline
   4   &18&18&18&10 &6.11\\\hline
    5  &20&20&20&20 &8.27 \\\hline
   \end{tabular}\end{center}
\caption{Parameters of our implementations on \eqref{sos} for Examples \ref{ex1}$\sim$\ref{ship}. $d_{v}$, $d_{u}$ and $d_p$: degree of polynomials $v(\bm{x})$, $u(\bm{x})$ and $p(\bm{x})$ in \eqref{sos}, respectively; $d_{s}:$ degree of polynomials $s_i$ in \eqref{sos}, respectively, $i=0,\ldots,4$; $T$: computation time (seconds).}
\label{table1}
\end{table}

\begin{example}[Population growth]
\label{ex1}
Consider the stochastic dynamical system
\[d X(t,w)=b(X(t,w))dt+\sigma(X(t,w))d W(t,w),\]
with $b(X(t,w))=-X(t,w)$ and $\sigma(X(t,w))=\frac{\sqrt{2}}{2} X(t,w)$, which is a stochastic model of population dynamics subject to random fluctuations that can be attributed to extraneous or chance factors such as the weather, location, and the general environment.

Suppose that the safe set is $\mathcal{X}=\{x\in \mathbb{R}\mid x^2-1<0\}$ and the target set is $\mathcal{T}=\{x \in \mathbb{R}\mid 100x^2 \leq 1\}$.

The computed inner-approximations of 0.9- and 0.5-reach-avoid sets are respectively illustrated in Fig. \ref{fig-zero1} and \ref{fig-zero2}, which also shows the computed function $v(x)$ via solving the semi-definite program \eqref{sos}. For gauging the quality of the computed inner-approximations, the 0.9- and 0.5-reach-avoid sets estimated via Monto-Carlo methods are also respectively presented in Fig. \ref{fig-zero1} and \ref{fig-zero2} for comparisons.

\begin{figure}
\center
\includegraphics[width=2.5in,height=1.5in]{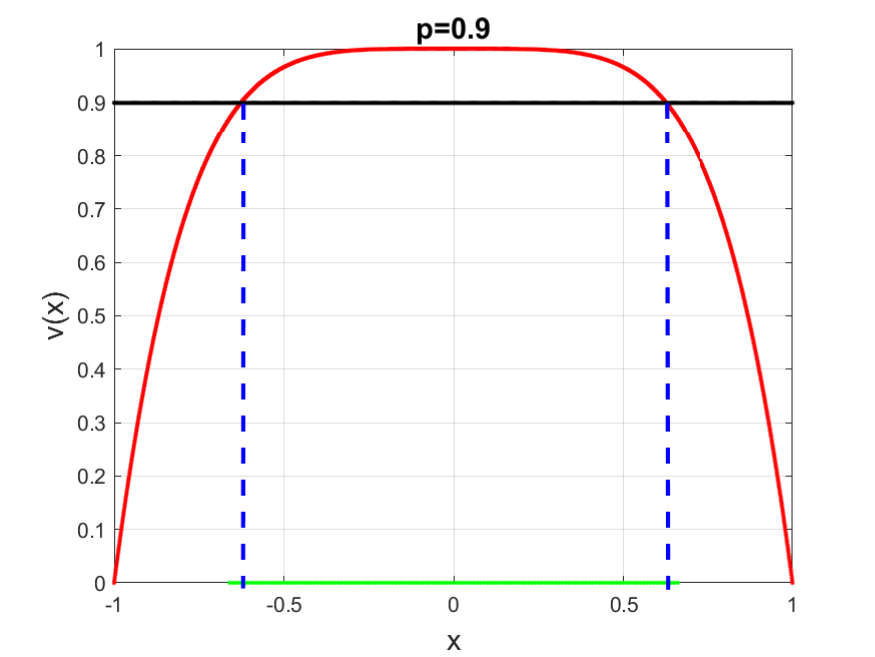} 
\caption{An illustration of inner-approximations of the reach-avoid set ${\rm RA}_p$ for Example \ref{ex1}. (
Red curve denotes level sets of the function $v(x)$ computed by solving \eqref{sos}. The set of states between the two dashed blue lines is an inner-approximation of the 0.9-reach-avoid set. The set of green states is the 0.9-reach-avoid set estimated via Monte-Carlo methods.)}
\label{fig-zero1}
\end{figure}

\begin{figure}
\center
\includegraphics[width=2.5in,height=1.5in]{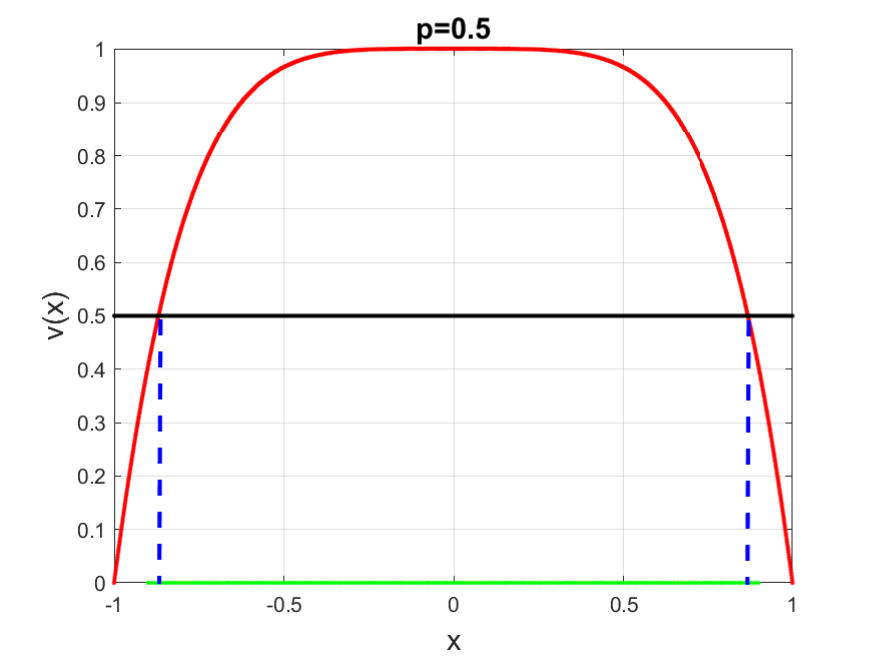} 
\caption{An illustration of inner-approximations of the reach-avoid set ${\rm RA}_p$ for Example \ref{ex1}. (
Red curve denotes level sets of the function $v(x)$ computed by solving \eqref{sos}. The set of states between the two dashed blue lines is an inner-approximation of the 0.5-reach-avoid set. The set of green states is the 0.5-reach-avoid set estimated via Monte-Carlo methods.)}
\label{fig-zero2}
\end{figure}

\end{example}

\begin{example}[Nonlinear drift]
\label{ex2}
Consider the nonlinear stochastic differential equation from \cite{prajna2004}, 
\begin{equation*}
\begin{split}
&dX_{1}(t,w)=X_{2}(t,w)dt,\\
&dX_{2}(t,w)=-(X_{1}(t,w)+X_{2}(t,w)+0.5X_{1}^3(t,w))dt\\
&~~~~~~~~~~~~~~~~~~~~~~~~~~~~~~~~~~~~~~~~~~~~~~~~~~+\sigma d W(t,w),
\end{split}
\end{equation*}
where $\sigma=0.1$. 

Suppose that the safe set and the target set are $\mathcal{X}=\{(x_1,x_2)^{\top}\in \mathbb{R}^2 \mid x_1^2+x_2^2-1<0\}$ and $\mathcal{T}=\{(x_1,x_2)^{\top}\in \mathbb{R}^2\mid 100(x_1-0.1)^2+100x_2^2\leq 1\}$, respectively. 

The computed function $v(x_1,x_2)$ via solving the semi-definite program \eqref{sos} is shown in Fig. \ref{fig-one_one} and the computed $0.5$- and $0.9$-reach-avoid sets are illustrated in Fig. \ref{fig-one}, which also shows two trajectories starting from $(0.5,0.5)^{\top}$ and $(-0.95,0.0)^{\top}$ respectively.  Also, we use the Monte-Carlo simulation method to assess the quality of computed inner-approximations, which is demonstrated in Fig. \ref{fig-one1}.

\begin{figure}
\center
\includegraphics[width=3.3in,height=2.5in]{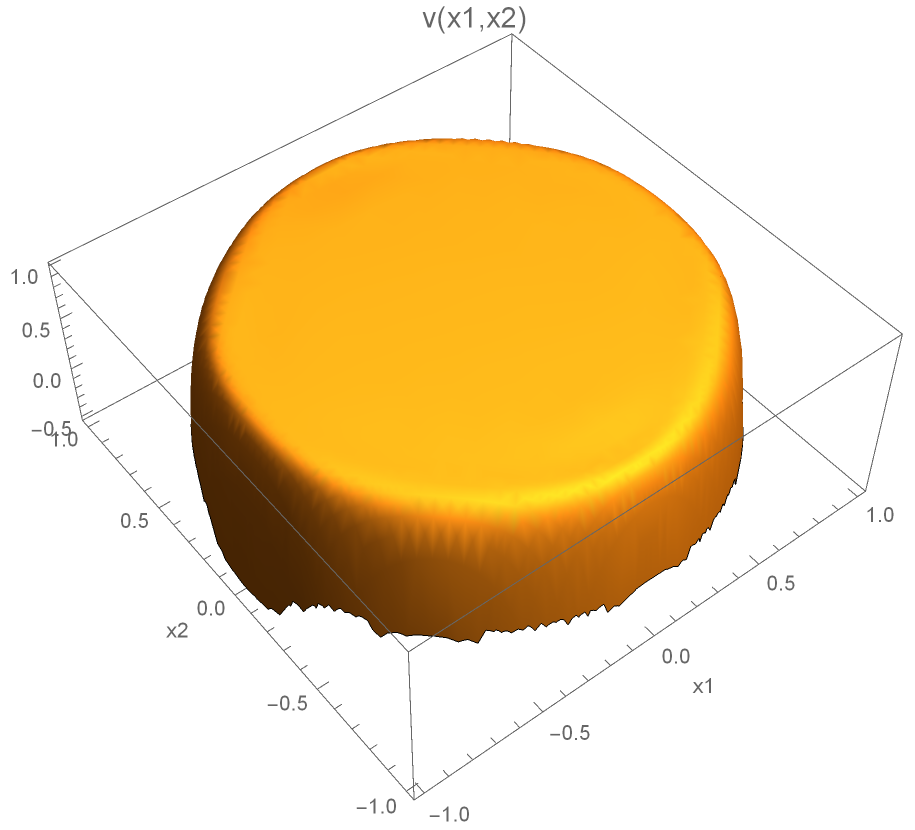} 
\caption{An illustration of the computed function $v(x_1,x_2)$ for Example \ref{ex2}.}
\label{fig-one_one}
\end{figure}

\begin{figure}
\center
\includegraphics[width=3.3in,height=1.8in]{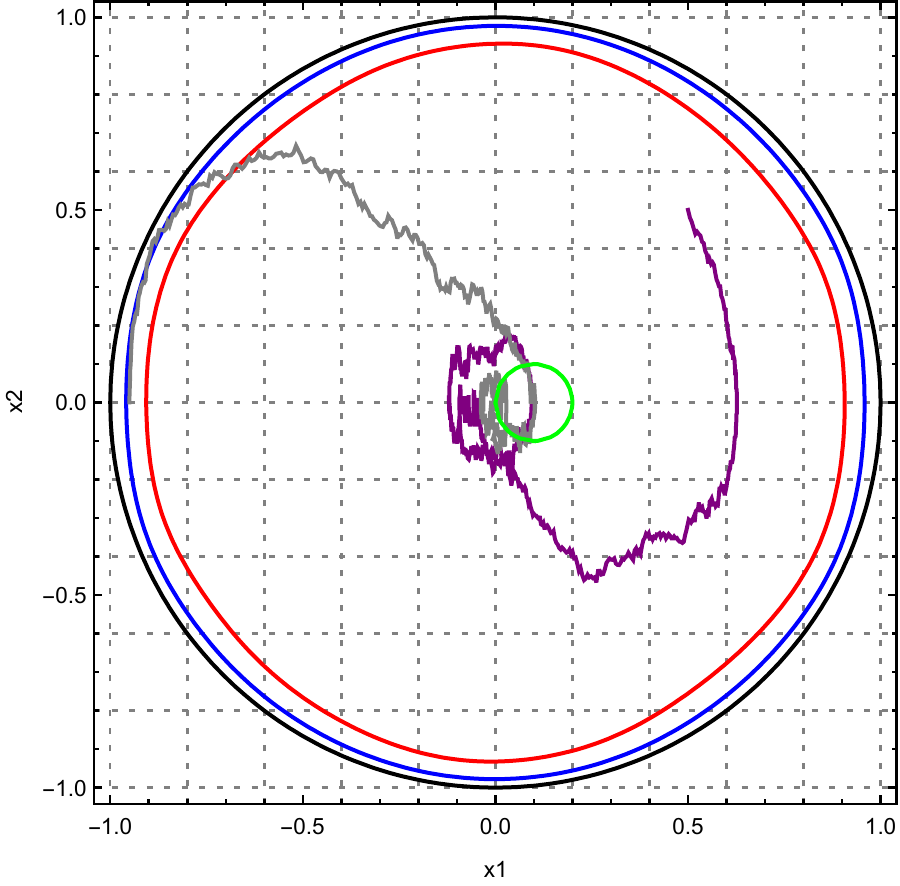} 
\caption{An illustration of inner-approximations of the reach-avoid set ${\rm RA}_p$ for Example \ref{ex2}. (Black and green curves denote the boundaries of the safe set $\mathcal{X}$ and the target set $\mathcal{T}$, respectively. Blue and red curves denote the boundaries of computed inner-approximation of the 0.5-reach-avoid set and 0.9-reach-avoid set, respectively.)}
\label{fig-one}
\end{figure}

\begin{figure}[htbp]
\center
\includegraphics[width=1.5in,height=1.8in]{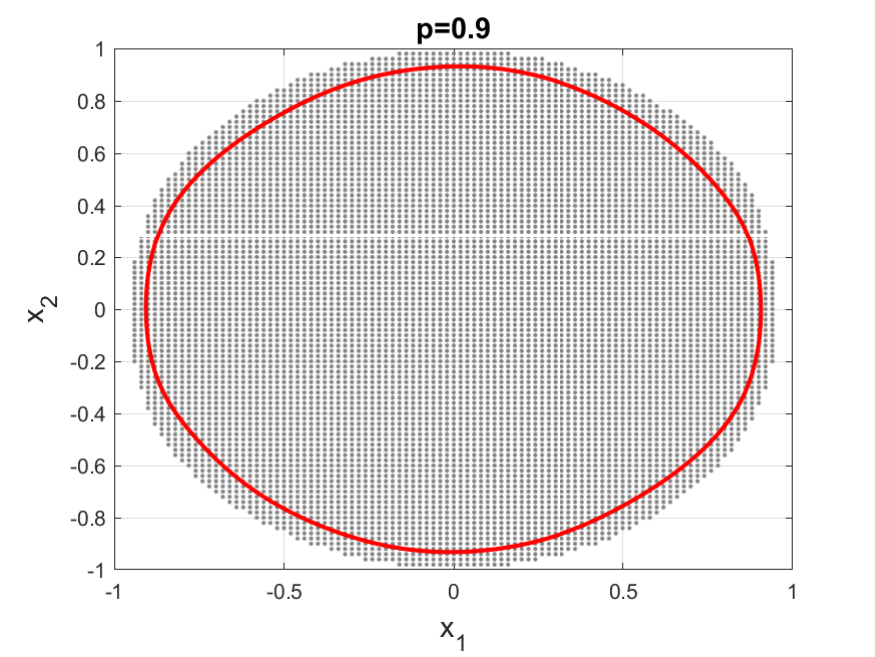} 
\includegraphics[width=1.5in,height=1.8in]{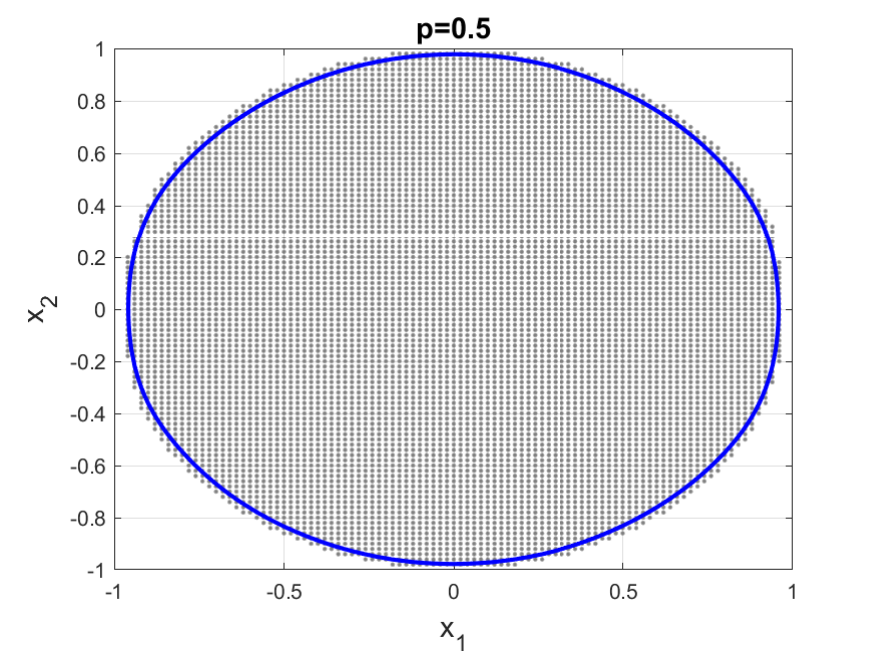}
\caption{An illustration of the quality of computed inner-approximations of the reach-avoid set ${\rm RA}_p$ for Example \ref{ex2}. (Red and blue curves denote the boundaries of computed inner-approximations of the 0.9- and 0.5-reach-avoid sets, respectively. Gray region denotes the $0.9/0.5$-reach-avoid set estimated via the Monte-Carlo simulation method.)}
\label{fig-one1}
\end{figure}

\end{example}

\begin{example}[Harmonic oscillator]
\label{ex3}
Consider a two-dimensional harmonic oscillator with noisy damping, 
\begin{equation*}
\begin{split}
&dX_{1}(t,w)=\zeta X_{2}(t,w)dt,\\
&dX_{2}(t,w)=(-\zeta X_{1}(t,w)-kX_{2}(t,w))dt\\
&~~~~~~~~~~~~~~~~~~~~~~~~~~~ -\sigma X_{2}(t,w) dW(t,w),
\end{split}
\end{equation*}
with $\zeta=1,k=7$ and $\sigma=2$. 

Suppose that the safe set and the target set are $\mathcal{X}=\{(x_1,x_2)^{\top}\in \mathbb{R}^2 \mid x_1^2+x_2^2-1<0\}$ and $\mathcal{T}=\{(x_1,x_2)^{\top}\in \mathbb{R}^2 \mid 10x_1^2+10x_2^2\leq 1\}$, respectively. 

The computed value function $v(x_1,x_2)$ via solving the semi-definite program \eqref{sos} is shown in Fig. \ref{fig-three_one} and the corresponding computed 0.5- and 0.9-reach-avoid sets are illustrated in Fig. \ref{fig-three}. Two trajectories starting from $(0.8,0.0)^{\top}$ and $(-0.9,0.0)^{\top}$ respectively are also illustrated in Fig. \ref{fig-three}. Also, we use the Monte-Carlo simulation method to assess the quality of computed inner-approximations, which is demonstrated in Fig. \ref{fig_three2}.

\begin{figure}
\center
\includegraphics[width=2.5in,height=2.0in]{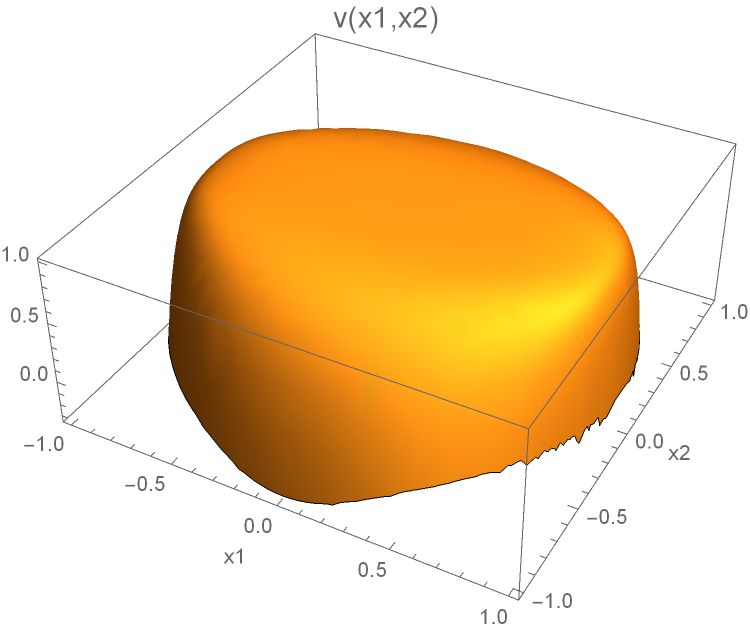} 
\caption{An illustration of the computed function $v(x_1,x_2)$ for Example \ref{ex3}.}
\label{fig-three_one}
\end{figure}

\begin{figure}
\center
\includegraphics[width=2.5in,height=1.8in]{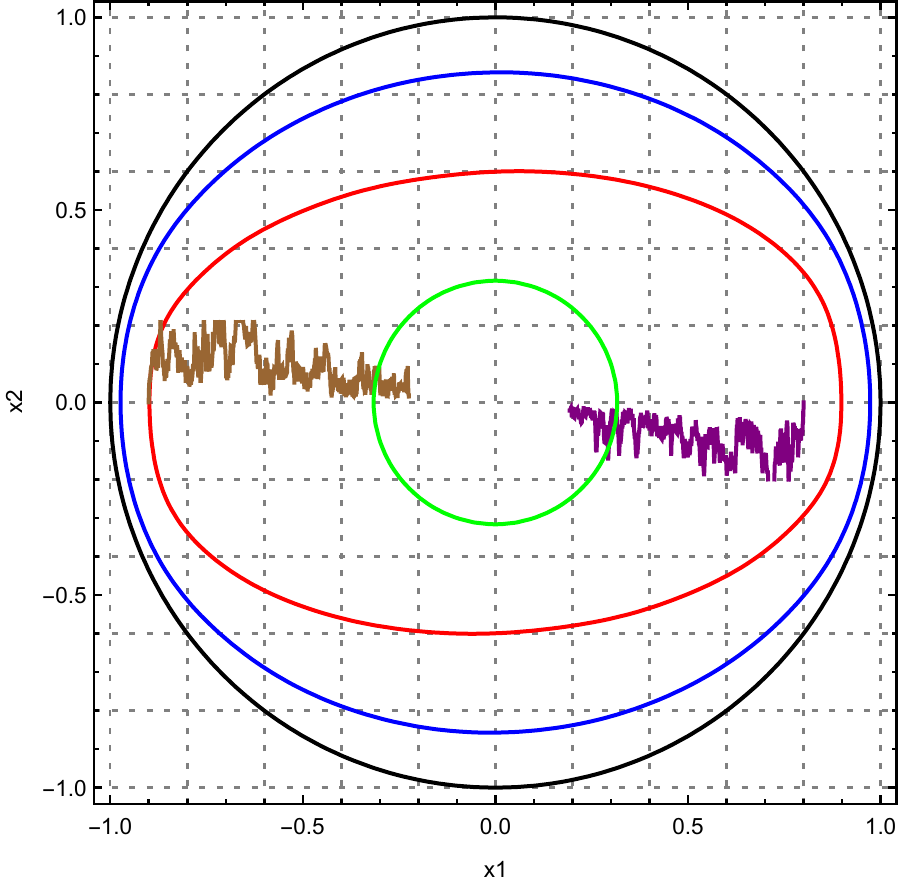} 
\caption{An illustration of computed inner-approximations for Example \ref{ex3}. (Black and green curves denote the boundaries of safe set $\mathcal{X}$ and target set $\mathcal{T}$, respectively. Red and blue curves denote the boundaries of computed inner-approximations of the 0.9- and 0.5-reach-avoid sets, respectively.)}
\label{fig-three}
\end{figure}

\begin{figure}[htbp]
\centering
\includegraphics[width=1.5in,height=1.8in]{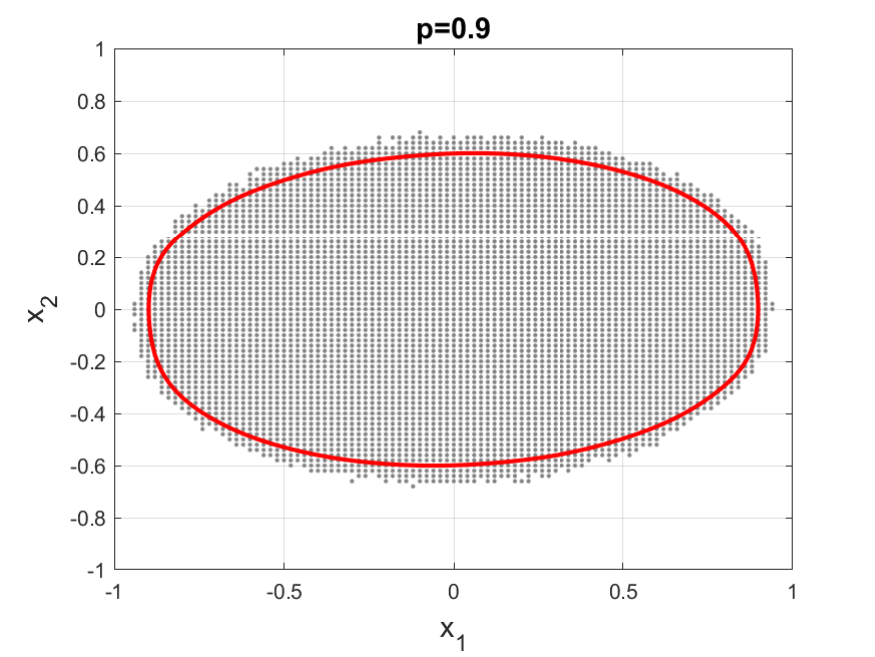} 
\includegraphics[width=1.5in,height=1.8in]{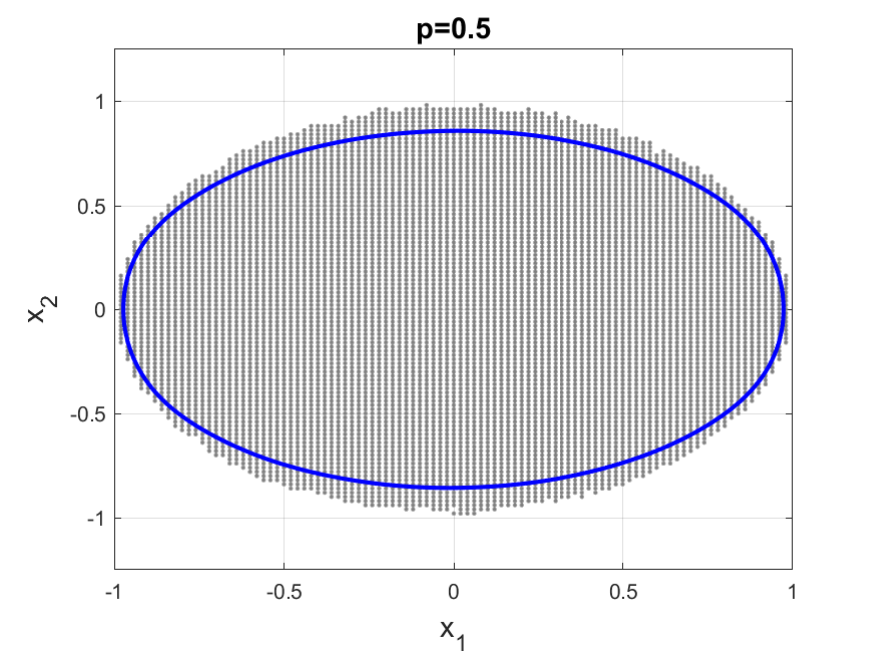} 
\caption{An illustration of the quality of computed inner-approximations of the reach-avoid set ${\rm RA}_p$ for Example \ref{ex3}. (Red and blue curves denote the boundaries of computed inner-approximations of the 0.9- and 0.5-reach-avoid sets, respectively. Gray region denotes the $0.9/0.5$-reach-avoid set estimated via the Monte-Carlo simulation method.)}
\label{fig_three2}
\end{figure}
\end{example}

\begin{example}
\label{ex5}
Consider the following nonlinear stochastic system from \cite{sloth2015safety},
\begin{equation*}
\begin{split}
&d X_{1}(t,\bm{w})=(-2X_{1}(t,\bm{w})+X^2_{2}(t,\bm{w}))dt+2 d W_{1}(t,\bm{w})\\
&d X_{2}(t,\bm{w})=-X_{2}(t,\bm{w}) dt+2 dW_{2}(t,\bm{w}).
\end{split}
\end{equation*} 

Suppose that the safe set and the target set are $\mathcal{X}=\{(x_1,x_2)^{\top}\in \mathbb{R}^2\mid x_1^2+x_2^2-1<0\}$ and $\mathcal{T}=\{(x_1,x_2)^{\top}\in \mathbb{R}^2\mid 4x_1^2+5(x_2-0.5)^2\leq 1\}$, respectively.

The computed function $v(x_1,x_2)$ via solving the semi-definite program \eqref{sos} is shown in Fig. \ref{fig-five_one} and the corresponding computed 0.1- and  0.5-reach-avoid sets are illustrated in Fig. \ref{fig-five1}. Two trajectories starting from $(-0.1,-0.5)^{\top}$ and $(0.1,0.5)^{\top}$ respectively are also illustrated in Fig. \ref{fig-five1}, one of which leaves the safe set $\mathcal{X}$. Also, we use the Monte-Carlo simulation method to assess the quality of computed inner-approximations, which is demonstrated in Fig. \ref{fig_five2}.

\begin{figure}
\center
\includegraphics[width=3.3in,height=2.5in]{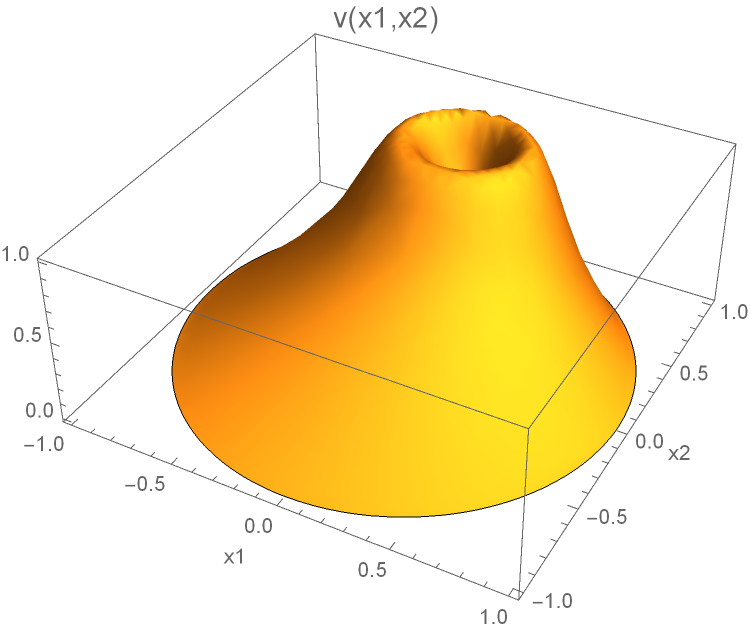} 
\caption{An illustration of the computed function $v(x_1,x_2)$ for Example \ref{ex5}.}
\label{fig-five_one}
\end{figure}

\begin{figure}
\center
\includegraphics[width=3.3in,height=1.8in]{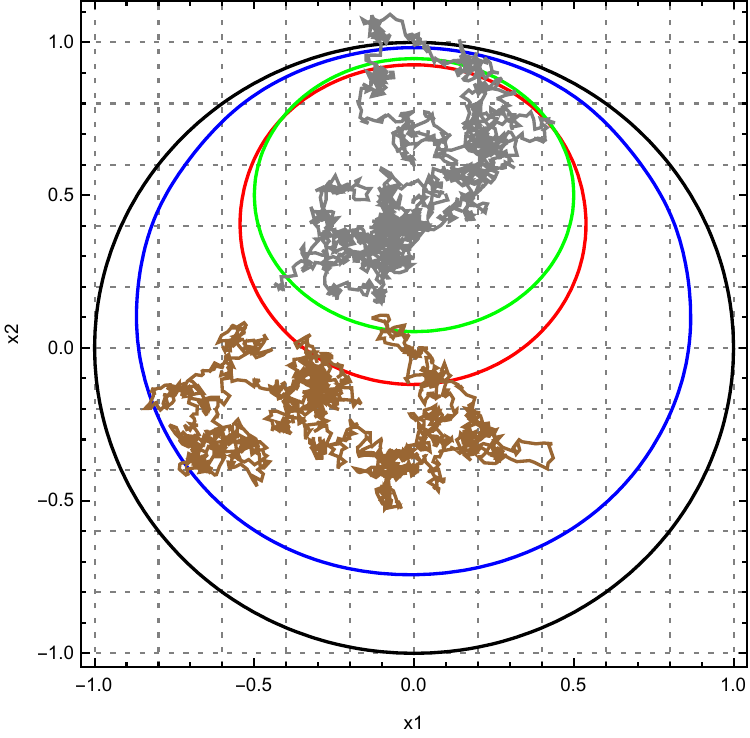} 
\caption{An illustration of inner-approximations of the reach-avoid set ${\rm RA}_p$ for Example \ref{ex5}. (Black and green curves denote the boundaries of the safe set $\mathcal{X}$ and the target set $\mathcal{T}$, respectively. Red and blue curves denote the boundaries of the computed 0.5- and  0.1-reach-avoid sets, respectively.)}
\label{fig-five1}
\end{figure}

\begin{figure}[htbp]
\centering
\includegraphics[width=1.5in,height=1.8in]{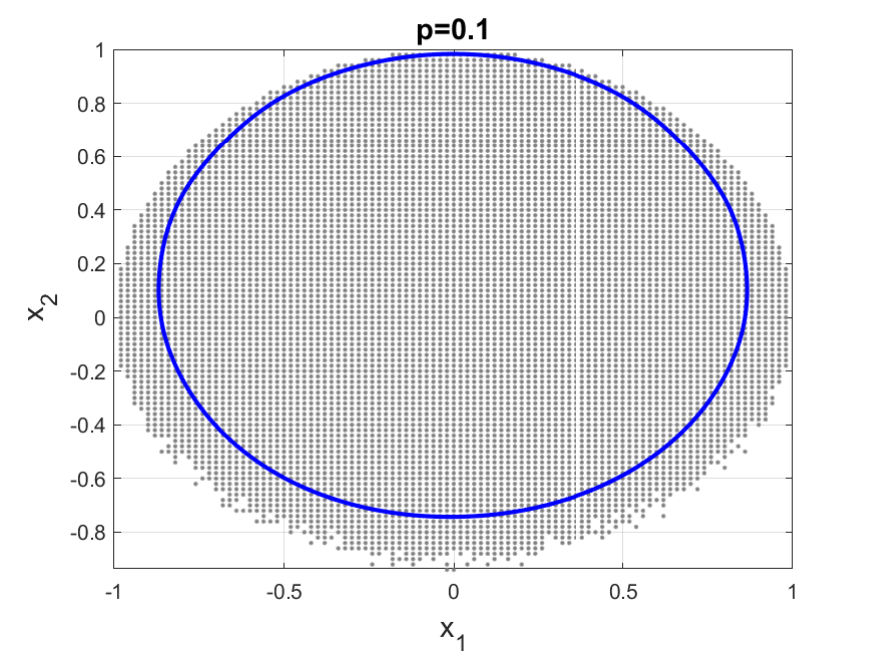} 
\includegraphics[width=1.5in,height=1.8in]{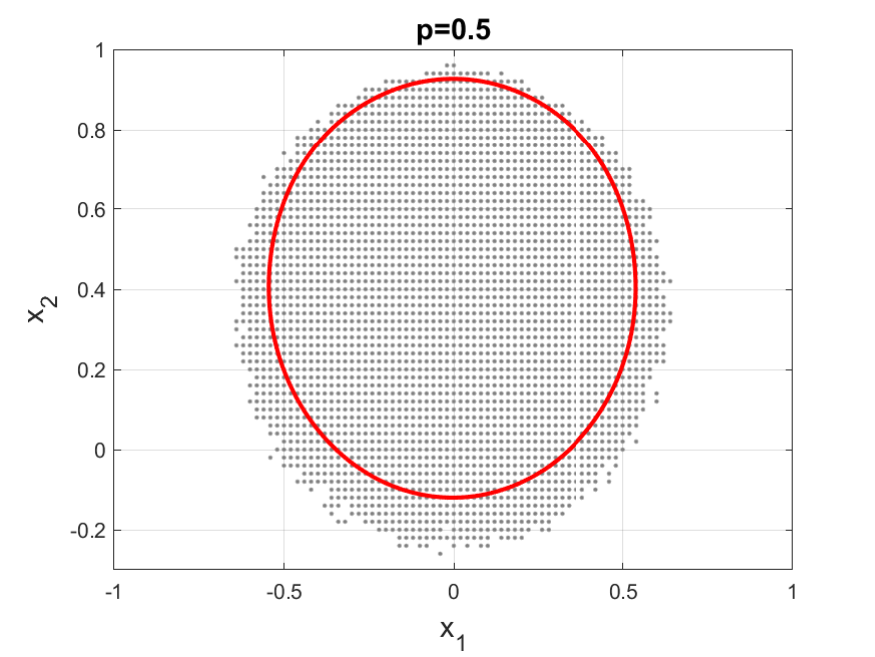} 
\caption{An illustration of the quality of computed inner-approximations of the reach-avoid set ${\rm RA}_p$ for Example \ref{ex5}. (Red and blue curves denote the boundaries of computed inner-approximations of the 0.5- and 0.1-reach-avoid sets, respectively. Gray region denotes the $0.5/0.1$-reach-avoid set estimated via the Monte-Carlo simulation method.)}
\label{fig_five2}
\end{figure}

\end{example}

\begin{example}
\label{ship}
As a model for the horizontal slow drift motions of a moored floating platform or ship responding to incoming irregular waves John Grue introduced the equation, 
\[\ddot{x}_t+a_0\dot{x}_t+\omega^2 x_t=(T_0-\alpha_0\dot{x}_t)\eta W_t \]
where $W_t$ is 1-dimensional white noise, $a_0,w,T_0,\alpha_0$ and $\eta$ are constants, which can be reduced to the following SDE
\begin{equation*}
\begin{split}
&d X_{1}(t,w)=X_{2}(t,w)dt\\
&d X_{2}(t,w)=(-\omega^2 X_{1}(t,w)-a_0 X_{2}(t,w)) dt\\
&~~~~~~~~~~~~~~~~~~~~~~~~~~~+(-\alpha_0 X_2(t,w)+T_0) \eta dW(t,w),
\end{split}
\end{equation*} 
where $\omega=a_0=\alpha_0=\eta=T_0=1.$

Suppose that the safe set and the target set are $\mathcal{X}=\{(x_1,x_2)^{\top}\in \mathbb{R}^2\mid x_1^2+x_2^2-1<0\}$ and $\mathcal{T}=\{(x_1,x_2)^{\top}\in \mathbb{R}^2\mid 4(x_1-0.2)^2+4(x_2-0.2)^2\leq 1\}$, respectively. 

The computed function $v(x_1,x_2)$ via solving the semi-definite program \eqref{sos} is shown in Fig. \ref{fig-ship_one} and the computed 0.1- and  0.5-reach-avoid sets are illustrated in Fig. \ref{fig-ship}. Three trajectories starting from $(-0.5,0.0)^{\top}$, $(-0.1,-0.5)^{\top}$ and $(0.1,0.9)^{\top}$ respectively are also illustrated in Fig. \ref{fig-ship}. Also, we use the Monte-Carlo simulation method to assess the conservativeness of computed inner-approximations, which is demonstrated in Fig. \ref{fig-ship1}.

\begin{figure}
\center
\includegraphics[width=3.3in,height=2.2in]{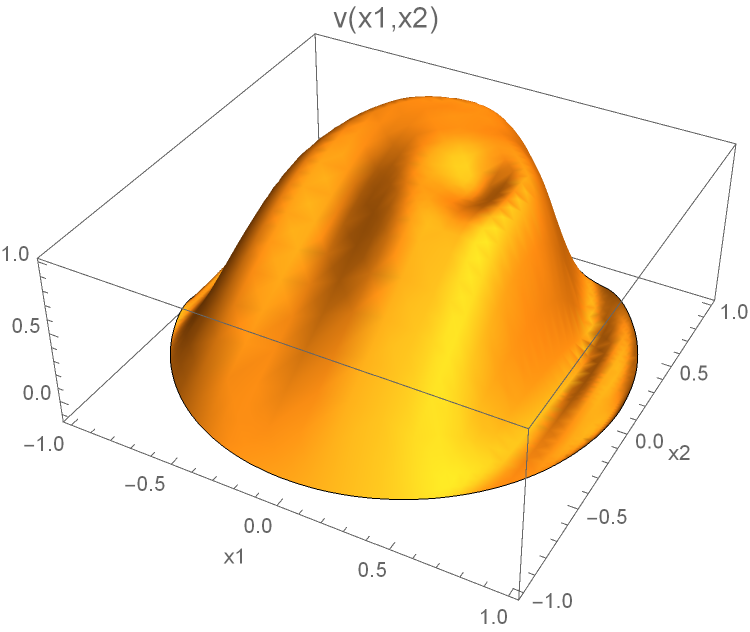} 
\caption{An illustration of the computed function $v(x_1,x_2)$ for Example \ref{ship}.}
\label{fig-ship_one}
\end{figure}
\begin{figure}
\center
\includegraphics[width=3.3in,height=1.8in]{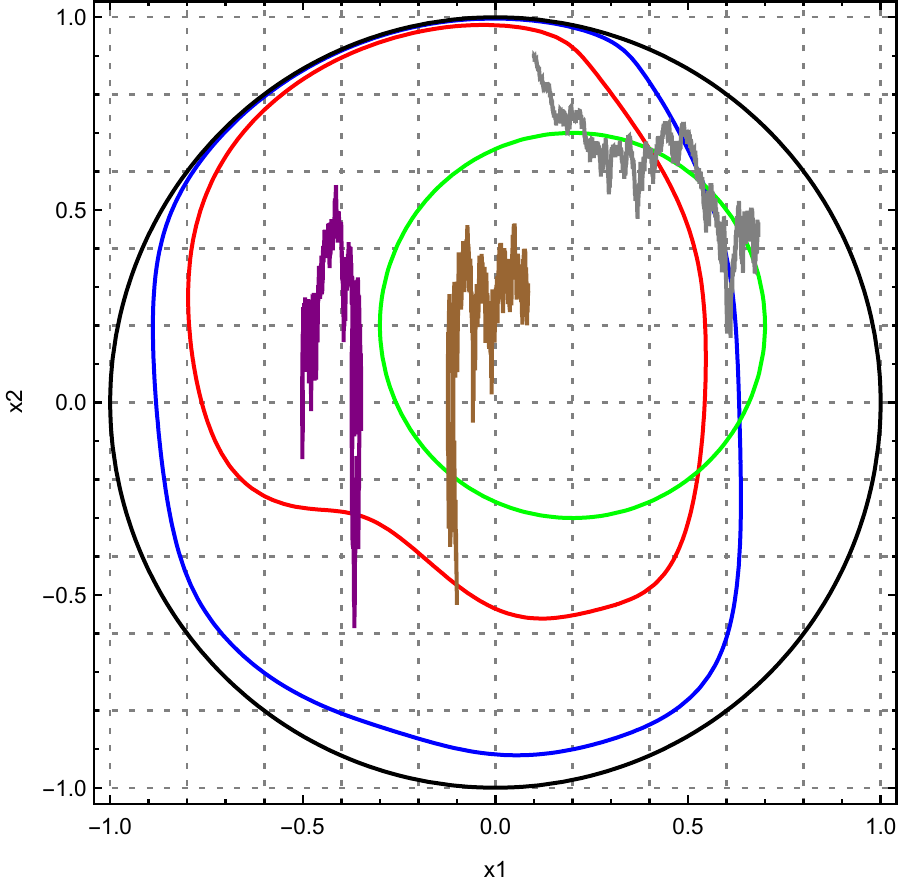} 
\caption{An illustration of inner-approximations of the reach-avoid set ${\rm RA}_p$ for Example \ref{ship}. (Black and green curves denote the boundaries of the safe set $\mathcal{X}$ and the target set $\mathcal{T}$, respectively. Red and blue curves denote the boundaries of the computed 0.5- and  0.1-reach-avoid sets, respectively.)}
\label{fig-ship}
\end{figure}

\begin{figure}[htbp]
\center
\includegraphics[width=1.5in,height=1.8in]{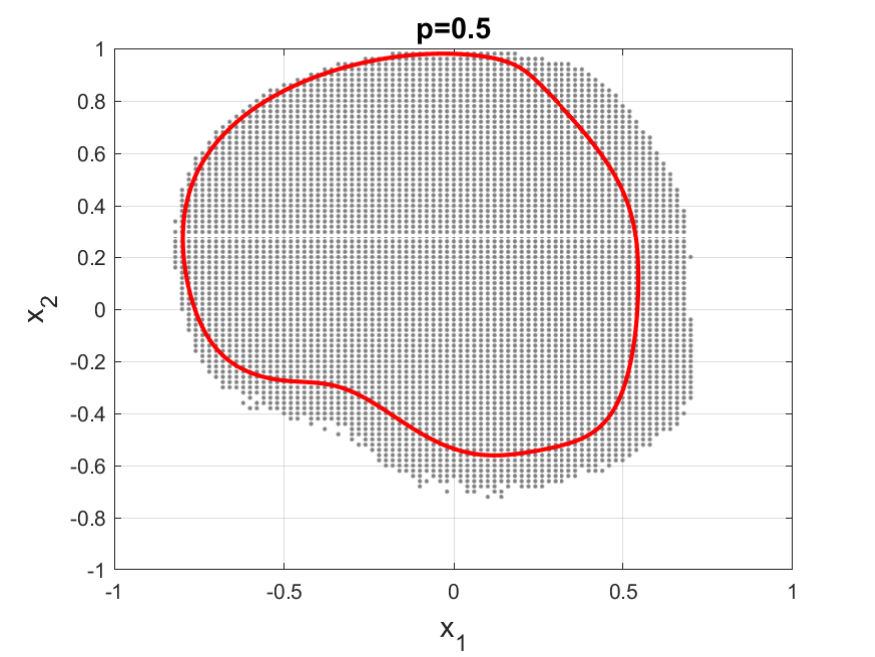} 
\includegraphics[width=1.5in,height=1.8in]{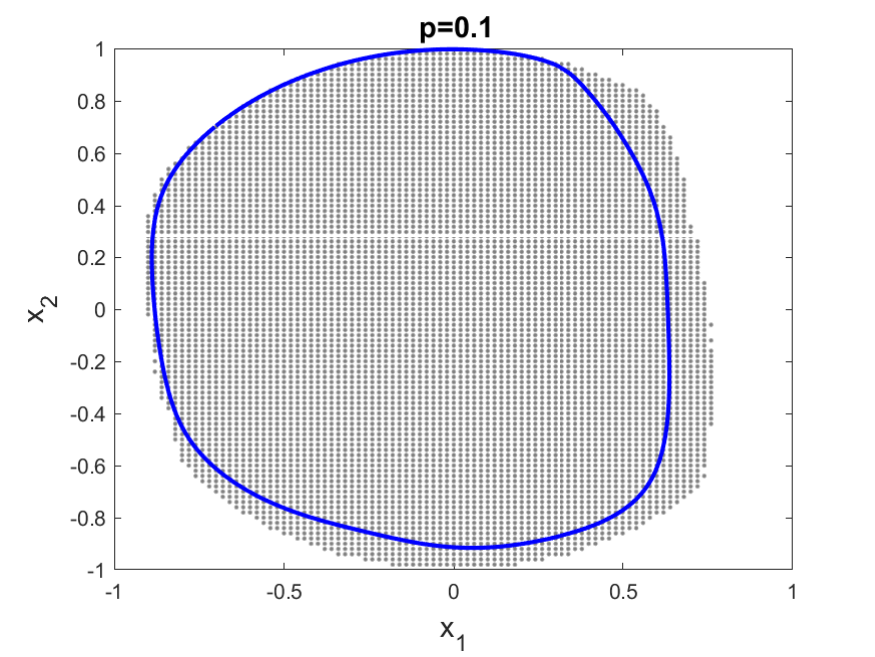}
\caption{An illustration of the quality of computed inner-approximations of the reach-avoid set ${\rm RA}_p$ for Example \ref{ship}. (Red and blue curves denote the boundaries of computed inner-approximations of the 0.5- and 0.1-reach-avoid sets, respectively. Gray region denotes the $0.9/0.5$-reach-avoid set estimated via the Monte-Carlo simulation method.)}
\label{fig-ship1}
\end{figure}
\end{example}

The proposed semi-definite programming method reduces the challenging (non-convex) problem of inner-approximating reach-avoid sets for polynomial SDEs into a convex optimization problem, which could be solved efficiently via interior point methods in polynomial time. In practice, computational cost can become prohibitive as either the dimension of SDEs or the polynomial degree of $v(\bm{x})$ and/or $u(\bm{x})$ increases, at least with the standard approach to the sum-of-squares optimization wherein generic semi-definite programs are solved by second-order symmetric interior-point algorithms. Large problems may be tackled using specialized nonsymmetric interior-point \cite{papp2019sum} or first order algorithms \cite{zheng2018fast}.

\section{Conclusion}
\label{sec:conclusion}
We have exposed and proved a correct algorithm based on semi-definite programming facilitating inner-approximations of $p$-reach-avoid sets of systems modeled by polynomial SDEs over open time horizons. As the $p$-reach-avoid set is the set of initial states forcing the system, with sufficient probability being larger than $p$, to eventually reach a desired target set while satisfying certain legal state constraints till the first hit time, it is of immediate interest in the design of reliable systems. The benchmark examples exposed in the previous section give an idea of the design or analysis obligations that can be answered by computation of safe, i.e., inner approximations of $p$-reach-avoid sets. They also demonstrate the performance of the proposed approach.

We would like to extend our method to solving reach-avoid problems of controlled SDEs and impulsive stochastic delay differential systems \cite{hu2019some}. 
\bibliographystyle{abbrv}
\bibliography{reference}

\end{document}